\colorlet{mylinkcolor}{violet}
\colorlet{mycitecolor}{YellowOrange}
\colorlet{myurlcolor}{Aquamarine}
\newtheorem{theorem}{Theorem}
\newtheorem{problem}[theorem]{Problem}
\newtheorem*{claim}{Claim}
\theoremstyle{remark}
\newtheorem{lemma}[theorem]{Lemma}
\newtheorem{obs}[theorem]{Observation}
\newcommand{\set}[1]{\{#1\}}
\newcommand{\norm}[1]{{\left|#1\right|}}
\newcommand{\setR}{\mathbb{R}}
\newcommand{\calX}{\mathcal{X}}
\newcommand{\calC}{\mathcal{C}}
\newcommand{\calD}{\mathcal{D}}
\newcommand{\calE}{\mathcal{E}}
\newcommand{\calH}{\mathcal{H}}
\newcommand{\calF}{\mathcal{F}}
\DeclareMathOperator\Inc{Inc}
\DeclareMathOperator\cover{cover}
\DeclareMathOperator\height{height}
\DeclareMathOperator\val{value}
\DeclareMathOperator\Min{Min}
\DeclareMathOperator\Max{Max}
\DeclareMathOperator\Up{U}
\DeclareMathOperator\D{D}
\let\le\leqslant
\let\leq\leqslant
\let\geq\geqslant
\let\subset\subseteq
\let\subsetneq\varsubsetneq
\let\supset\supseteq
\let\epsilon\varepsilon
\renewenvironment{enumerate}{\begin{enumorig}[label=\textup{(\roman*)}, noitemsep, topsep=2pt plus 2pt, labelindent=.2em, leftmargin=*, widest=iii]}{\end{enumorig}}
\newenvironment{enumeratea}{\begin{enumorig}[label=\textup{(\alph*)}, noitemsep, topsep=2pt plus 2pt, labelindent=.2em, leftmargin=*, widest=iii]}{\end{enumorig}}
\newenvironment{enumerateNested}{\begin{enumorig}[label=\textup{(\alph{enumi}.\arabic*)}, noitemsep, topsep=2pt plus 2pt, labelindent=.2em, leftmargin=*, widest=iii]}{\end{enumorig}}
\newenvironment{enumerateD}{\begin{enumorig}[label=\textup{(d.\arabic*)}, noitemsep, topsep=2pt plus 2pt, labelindent=.2em, leftmargin=*, widest=iii]}{\end{enumorig}}
\let\old@setaddresses\@setaddresses
\def\@setaddresses{\bigskip\bgroup\parindent 0pt\let\scshape\relax\old@setaddresses\egroup}
\begin{document}
\title[Topological minors of cover graphs and dimension]{Topological minors of cover graphs and dimension}

\author[P.~Micek]{Piotr Micek}

\address[P.~Micek]{Theoretical Computer Science Department\\
  Faculty of Mathematics and Computer Science\\
  Jagiellonian University\\
  Krak\'ow\\
  Poland}
  
\email{piotr.micek@tcs.uj.edu.pl}
\thanks{P. Micek is supported by the Mobility Plus program from The Polish Ministry of Science and Higher Education.}

\author[V.~Wiechert]{Veit Wiechert}
\address[V.~Wiechert]{Institut f\"ur Mathematik\\
  Technische Universit\"at Berlin\\
  Berlin \\
  Germany}

\email{wiechert@math.tu-berlin.de}


\thanks{V.\ Wiechert is supported by the Deutsche Forschungsgemeinschaft within the research training group `Methods for Discrete Structures' (GRK 1408).}

\date{\today}

\subjclass[2010]{06A07, 05C35}

\keywords{Poset, dimension, cover graph, graph minor}


\begin{abstract}
 We show that posets of bounded height whose cover graphs exclude a fixed graph as a topological minor have bounded dimension.
 This result was already proven by Walczak.
 However, our argument is entirely combinatorial and does not rely on structural decomposition theorems.
 Given a poset with large dimension but bounded height, we directly find a large clique subdivision in its cover graph.
 Therefore, our proof is accessible to readers not familiar with topological graph theory, and it allows us to provide explicit upper bounds on the dimension.
 With the introduced tools we show a second result that is supporting a conjectured generalization of the previous result.
 We prove that $(k+k)$-free posets whose cover graphs exclude a fixed graph as a topological minor contain only standard examples of size bounded in terms of $k$.
\end{abstract}

\maketitle

\section{Introduction}

This paper falls into the area of combinatorics of finite partially ordered sets, called \emph{posets}.
The \emph{dimension} of a poset $P$ is the least integer $d$, such that elements of $P$ can be embedded into $\setR^d$ so that $x< y$ in $P$ if and only if 
the point of $x$ is below the point of $y$ with respect to the product order on $\setR^d$.
Equivalently, the dimension of $P$ is the least $d$ such that there are $d$ linear extensions of $P$ whose intersection is $P$. 
This parameter was introduced in~1941 by Dushnik and Miller and is one of the most important measures of a poset's complexity. 
A vast amount of research in the field is concerned with finding reasons or witnesses for high dimension.
And on the other hand, sufficient conditions that give upper bounds for the dimension are of interest.
See Trotter's monograph~\cite{Tro-book} or his chapter in~\cite{Tro-handbook} for a survey on finite posets and dimension theory.

The contribution of this paper is a new approach for upper bounding the dimension of posets. 
We prove two theorems within the same framework.
The first theorem was recently proved by Walczak~\cite{Wal15}. 
The second is new and is a step towards the resolution of questions repeatedly posed in the field.

\begin{theorem}\label{thm:main-thm}
Posets of height at most $h$ whose cover graphs exclude $K_n$ as a topological minor have dimension bounded in terms of $h$ and $n$.
\end{theorem}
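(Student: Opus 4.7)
The plan is to prove the contrapositive: given a poset $P$ of height at most $h$ whose dimension exceeds a sufficiently large function $f(h,n)$, I will construct a subdivision of $K_n$ inside the cover graph of $P$. The starting point is the well-known reformulation of dimension via the incomparability graph $\Inc(P)$: large dimension produces many critical pairs $(x,y)$ forming a complex alternating structure, and the task is to funnel enough of these critical pairs into a clique subdivision. A standard first reduction would pass to a ``min-max'' or suitably irreducible subposet where every pair to be separated is witnessed by a min/max alternating cycle.

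The next step is to translate high dimension into explicit structure inside the cover graph. For each critical pair $(x,y)$ with $x \nleq y$ there is a canonical cover path going from $x$ up to some maximal element above it and from $y$ down to some minimal element below it; since $\height(P) \le h$, each such witness has length at most $h$. A Ramsey/pigeonhole step on the levels visited, the local structure of these cover witnesses, and on how they interact with up-sets/down-sets of fixed elements lets me extract a huge family of critical pairs that all share the same combinatorial type.

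Armed with this uniform family, I would build the branch vertices $v_1,\dots,v_n$ of the $K_n$-subdivision together with internally disjoint paths $P_{ij}$ inductively. At stage $t$, the portion of $P$ already ``consumed'' is bounded, so by cleaning (removing up-/down-closures of used vertices) I pass to a large subposet of height $\le h$ whose cover graph avoids the used vertices and which still retains dimension large enough for the remaining stages. From this residual structure I pick the next branch vertex $v_{t+1}$ together with $t$ new paths back to $v_1,\dots,v_t$, using canonical cover witnesses of carefully chosen critical pairs. Bounded height is crucial here since it caps each path at length $O(h)$, so the set of ``used vertices'' grows only polynomially in $n$ and $h$.

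The main obstacle is enforcing internal disjointness of all $\binom{n}{2}$ paths simultaneously while maintaining enough residual dimension to continue. Choosing paths greedily risks having earlier paths permanently block regions needed later, and controlling this requires a delicate two-level induction: an outer induction on $n$ that peels off branch vertices, feeding into an inner Ramsey-type reduction that purifies the remaining critical pairs with respect to the already-used structure. Calibrating the cleaning/pigeonhole losses so that the residual poset still has dimension large enough to iterate yields an explicit (and rapidly growing) bound $f(h,n)$, which is exactly what the combinatorial approach of the paper should make visible.
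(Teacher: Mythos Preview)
Your outline has a genuine gap at the heart of the construction: you never explain how to obtain paths in $\cover(P)$ connecting two prospective branch vertices. A critical pair $(x,y)$ tells you that $x$ and $y$ are \emph{incomparable}; the ``canonical cover paths'' you describe (from $x$ up to a maximal element, from $y$ down to a minimal element) are two unrelated directed paths, not a path from $x$ to $y$. Nothing in your proposal produces the kind of connected routing structure needed to link branch vertices through the cover graph.

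Relatedly, the claim that ``bounded height caps each path at length $O(h)$'' is false. Paths in $\cover(P)$ are undirected and may zigzag up and down; even a height-$2$ poset can have cover-graph paths of arbitrary length. So the bookkeeping that ``used vertices grow only polynomially in $n$ and $h$'' collapses, and with it the cleaning step that is supposed to preserve enough residual dimension. More fundamentally, there is no general additive bound saying that deleting the up- and down-closures of a bounded set of points drops $\dim(P)$ by only a bounded amount; without a mechanism tying the deleted points to the dimension (as the paper arranges via its questions~\eqref{q:improving element} and~\eqref{q:dual improving element}), this step does not go through.

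What the paper actually does is quite different. The key device is the \emph{unfolding} lemma (Lemma~\ref{lem:unrolling}): starting from sets $A\subset\Min(P)$, $B\subset\Max(P)$ with large $\chi(A,B)$, one finds $A'\subset A$, $B'\subset B$ with $\chi(A',B')\geq\chi(A,B)/2$ and a set $S$ that is \emph{connected} in $\cover(P)$ and satisfies $B'\subset\Up(S)$, $A'\cap\D(S)=\emptyset$ (or the dual). Iterating this in Phase~1 builds a large collection $\calC$ of pairwise disjoint connected ``hubs'' below the current $B$. In Phase~2, branch vertices are chosen so that below each one the hubs separate cleanly; each pair of branch vertices is then joined by a path that descends into its own private hub $C_{v_1v_2}\in\calC$ and uses the connectivity of $C_{v_1v_2}$ to cross over. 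Internal disjointness comes from the disjointness of the hubs and the ``clean branching'' invariant~\ref{inv:clean-branching-for-V}, not from any bound on path lengths. Your proposal is missing this entire mechanism.
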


There is a long history of research behind this theorem.
In 1977, Trotter and Moore~\cite{TM77} showed that posets whose cover graphs are trees have dimension at most $3$.
More recently, Felsner et al.~\cite{FTW13} showed that posets with outerplanar cover graphs have dimension at most $4$.
One cannot hope for a similar result for posets with planar cover graphs.
Recall that the simplest construction of a $d$-dimensional poset is the \emph{standard example} $S_d$, which is
the poset on $d$ minimal elements $a_1, \dots, a_d$ and $d$ maximal elements $b_1, \dots, b_d$ such that $a_i < b_j$ in $S_d$ if and only if $i \neq j$. 
Already in 1981, Kelly~\cite{Kel81} presented a family of posets with planar cover graphs 
that contain arbitrarily large standard examples as subposets (see Figure~\ref{fig:kelly}), and hence have large dimension.
Note that the height of Kelly's examples grows together with their dimension.
Felsner, Li, and Trotter~\cite{FLT10} proved that posets of height $2$ with planar cover graphs have dimension at most $4$.
Once this result was published, several researchers in the field expressed their believe that posets with planar cover graphs but bounded height should have bounded dimension.
Streib and Trotter showed in~\cite{ST14} that this is indeed the case.
Joret et al.~\cite{JMMTWW} continued this line of research and proved that posets of bounded height whose cover graphs have bounded treewidth also have bounded dimension. 
Note that Theorem~\ref{thm:main-thm} generalizes all these results.

Meanwhile, so after the submission of this manuscript,
Theorem~\ref{thm:main-thm} was significantly generalized.
Together with Gwena\"{e}l Joret we proved that posets with cover graphs in a class of graphs with bounded expansion have dimension bounded by a function of their height~\cite{JMW16}.

We would like to emphasize that the argument in this paper is entirely combinatorial and avoids sophisticated techniques.
In particular, we avoid Ramsey arguments and applications of structural decomposition theorems by Robertson-Seymour~\cite{RS03}, and Grohe-Marx~\cite{GM15}.
Instead, we explicitly construct a subdivision of a large clique in the cover graph of a poset that has large dimension but bounded height.
For that reason, our argument is accessible to readers who are not comfortable with topological graph theory, especially in comparison to the proof in~\cite{Wal15}. 
Moreover, this allows us to give an explicit bound on the dimension in Theorem~\ref{thm:main-thm}.

As witnessed by Kelly's examples, one cannot drop the condition on height in Theorem~\ref{thm:main-thm}.
However, we believe that it can be relaxed.
Bounding the height of a poset is nothing else than forbidding a long chain as a subposet.
A promising line of research is concerned with ($k+k$)-free posets ($k\geq2$), which are defined by excluding two incomparable chains of length $k$ as a subposet.
This class of posets is also a natural generalization of \emph{interval orders}, which are known to be exactly the class of $(2+2)$-free posets.
Over the last few years, a number of nice results~\cite{DJW12,FKT13,LMSTW14} emerged pointing out that problems difficult for the class of all posets might be tractable for $(k+k)$-free posets.
The following question was published in~\cite{Wal15}, but also communicated by a number of other people in the field.
We give support for its positive resolution.
\begin{problem}\label{prob:k+k}
Do $(k+k)$-free posets whose cover graphs exclude $K_n$ as a topological minor have dimension bounded in terms of $k$ and $n$?
\end{problem}
\begin{theorem}\label{thm:standard-ex}
 The $(k+k)$-free posets whose cover graphs exclude $K_n$ as a topological minor contain only standard examples of size bounded in terms of $k$ and $n$.
\end{theorem}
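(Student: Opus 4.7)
The plan is to prove the contrapositive: if a $(k+k)$-free poset $P$ contains a standard example $\{(a_i,b_i):i\in[d]\}$ with $d$ very large in terms of $k$ and $n$, then its cover graph $G$ contains $K_n$ as a topological minor, contradicting the hypothesis. For every ordered pair $(i,j)$ with $i\neq j$, fix a saturated chain in $P$ witnessing $a_i<b_j$; this produces a cover path $\pi_{ij}$ in $G$. The $a_i$, the $b_j$, and the family of paths $\{\pi_{ij}\}$ form the raw material out of which I aim to carve the $K_n$-subdivision.

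The key use of the $(k+k)$-free assumption is to rein in the internal structure of these witnessing chains. If $P$ had bounded height, the chains would be short and one could directly apply the combinatorial machinery developed for Theorem~\ref{thm:main-thm}. Without a height bound the chains may be arbitrarily long, but $(k+k)$-freeness forbids any two of them from staying ``incomparably apart'' across a stretch of length $k$. Consequently one can pin down a small set of roughly $k$ \emph{control} elements on each chain, and then by iterated Ramsey-type pigeonholing over pairs of indices $i,j\in[d]$ pass to a sub-family of indices on which the control elements exhibit a uniform comparability pattern across chains.

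Once such a uniform sub-family is in place, the plan is to feed it into the same machinery used for Theorem~\ref{thm:main-thm}, namely the explicit construction that produces disjoint connections between prescribed branch vertices from a rich collection of witnessing cover paths. With $d$ taken sufficiently large in terms of $k$ and $n$, this should allow us to isolate $n$ branch vertices in $G$ together with $\binom{n}{2}$ pairwise internally disjoint paths among them, yielding the desired $K_n$-subdivision.

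The principal obstacle will be replacing the bounded-height hypothesis of Theorem~\ref{thm:main-thm} with $(k+k)$-freeness in the tools. Concretely, one must control how two paths $\pi_{ij}$ and $\pi_{i'j'}$ with $\{i,j\}\cap\{i',j'\}=\emptyset$ intersect: the $(k+k)$-free condition guarantees their underlying chains are comparable in a forced way, but converting this into \emph{internally disjoint} paths suitable for a topological minor requires a careful choice of which segments of each $\pi_{ij}$ to keep and which to discard. Handling this path-collision issue without the crutch of short chains is, I expect, where the main technical work lies, and it is the step that determines the dependence of the final bound on $k$ and $n$.
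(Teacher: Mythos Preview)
Your proposal is an outline rather than a proof, and the step you yourself flag as ``where the main technical work lies'' is precisely the step that is missing. Fixing all the cover paths $\pi_{ij}$ at the outset and then trying to disentangle them via Ramsey-type pigeonholing on ``control elements'' is not an approach the paper takes, and it is not clear it can be made to work: even after you uniformise the comparability pattern of your control points, you have said nothing about how to convert this into \emph{internally disjoint} paths. Two chains that are forced to be comparable at many places may still share vertices in an uncontrolled way in the cover graph, and you give no mechanism for routing around such collisions.

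The paper's argument is both different and simpler. It does not pre-commit to any paths $\pi_{ij}$, it uses no Ramsey step, and it never analyses how two long chains intersect. Instead it reduces to the bounded-height situation of Theorem~\ref{thm:main-thm}. The key observation you are missing is this: take any incomparable pair $(a_0,b_0)$ from the standard example and call $b\in B$ \emph{close} if every directed path from $a_0$ to $b$ has length $<k$ (and dually for $a\in A$). Since $P$ is $(k+k)$-free, for every other incomparable pair $(a,b)$ at least one of $a,b$ must be close; hence at least half of $B$ is close to $a_0$ or at least half of $A$ is close to $b_0$. One can therefore add $a_0$ (or $b_0$) to a growing antichain $C$ (or $D$) with the property that \emph{all} directed paths from points of $C$ to the surviving $B$ have length $<k$. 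Iterating this inside the Phase~1 loop of Theorem~\ref{thm:main-thm} (with $\chi(A,B)$ replaced by the size of the standard example, and Lemma~\ref{lem:unrolling} replaced by the close/far dichotomy above) eventually yields $|C|=M$ while the subposet $\Up(C)\cap\D(B)$ has height $<k$. Phase~2 and the construction of the $K_n$-subdivision then run verbatim as in Theorem~\ref{thm:main-thm}, because that machinery only needs bounded height between the collection and $B$, not in the whole poset.

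In short, the missing idea is the one-line use of $(k+k)$-freeness that produces a single point $a_0$ at short chain-distance to \emph{half} of $B$ simultaneously; this is what substitutes for the unfolding lemma and collapses the problem to bounded height. Your Ramsey-on-control-elements plan does not supply this reduction, and without it the disjoint-paths step has no traction.
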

Clearly, the dimension bounds the size of the largest standard example in a poset.
But the converse is not true as interval orders avoid $S_2$ and still can have arbitrarily large dimension~\cite{FHRT92}.
Interestingly, Problem~\ref{prob:k+k} for $k=2$, i.e.~for interval orders, has a positive resolution.
It is a fast corollary from the result of Kierstead and Trotter~\cite{KT00} that
for each interval order $Q$, there is an integer $d$ such that every interval order $P$ with $\dim(P)>d$ contains $Q$ as a subposet.

\begin{figure}[t]
 \centering
 \includegraphics[scale=1.0]{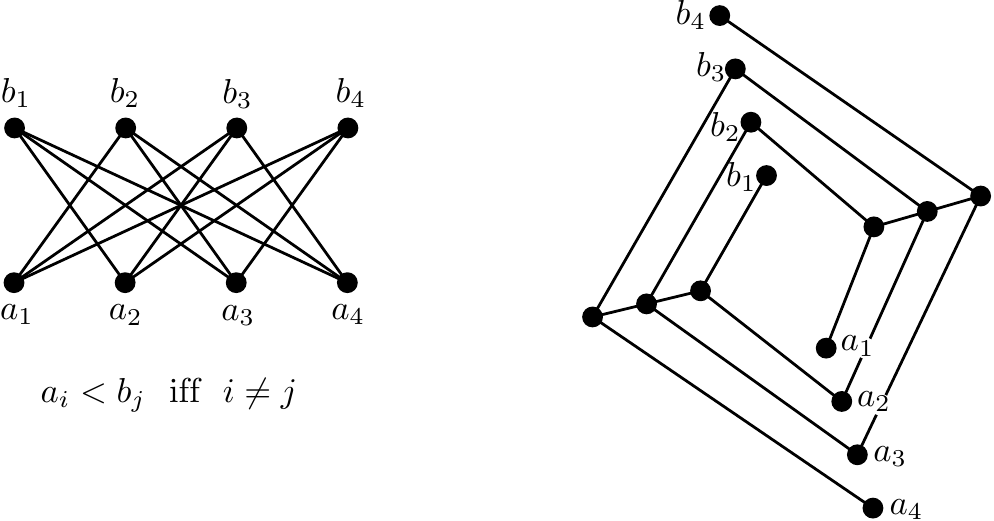}
 \caption{Standard example $S_4$ and Kelly's example containing $S_4$.}
 \label{fig:kelly}
\end{figure}

We finish the introduction with a question asked by several authors~\cite{JMMTWW,Wal15}.
Note that with Theorem~\ref{thm:standard-ex} a positive resolution of Problem~\ref{prob:standard-examples} gives a positive answer for Problem~\ref{prob:k+k} as well.
\begin{problem}\label{prob:standard-examples}
Do posets without $S_d$ as a subposet and whose cover graphs exclude $K_n$ as a topological minor have dimension bounded in terms of $d$ and $n$?
\end{problem}
This problem is wide open already for posets with planar cover graphs.
Trotter and Wang~\cite{TW15+} show that the difference between the dimension and the largest size of a standard example can be arbitrarily large in this case.
However, it is still possible that the dimension of posets with planar cover graphs is linear in the size of their largest standard example.

The paper is organized as follows.
In Section $2$ we introduce basic notations and concepts.
Furthermore, we prove a lemma that plays a key role in our main proof.
In Section $3$ we present a proof of Theorem~\ref{thm:main-thm}, and in Section $4$ Theorem~\ref{thm:standard-ex} is proved.

\section{Preliminaries}\label{sec:preliminaries}

For integers $a,b\geq0$, let $[a]=\set{1,\ldots,a}$ and $[a,b]=\set{a,\ldots,b}$.

By $K_n$ we denote the graph on $n$ vertices with all possible edges.
A \emph{subdivision} of a graph $H$, informally, is any graph $H'$ obtained from $H$ by replacing some edges of $H$ by paths.
Formally, $H'$ contains all vertices of $H$ and for every edge $e$ in $H$ there is a path $P_{e}$ in $H'$ between endpoints of $e$ such that 
the interior of $P_e$ avoids the vertices of $H$, and paths $P_e$, $P_f$ are internally disjoint for all distinct edges $e$, $f$ in $H$.
A graph $H$ is a \emph{topological minor} of $G$ if $G$ contains a subdivision of $H$ as a subgraph.

All posets in the paper are finite.
Elements of a poset $P$ are called \emph{points}.
Points $x,y\in P$ are said to be \emph{comparable} in $P$, if $x\leq y$ or $x\geq y$ in $P$.
Otherwise, $x$ and $y$ are \emph{incomparable} in $P$.
We write $x<y$ in $P$ if it holds that $x\leq y$ and $x\neq y$.
For distinct $x,y\in P$, point $x$ is \emph{covered} by $y$ in $P$ if $x<y$ in $P$ and there is no $z\in P$ with $x<z<y$ in $P$.
In this case, $x<y$ is a \emph{cover relation} of $P$.
The \emph{cover graph} of $P$, denoted by $\cover(P)$, is the graph on the points of $P$ with edges corresponding to cover relations of $P$.
Informally, the cover graph of $P$ can be seen as the undirected graph behind the order diagram of $P$.
A path $x_1,\ldots,x_n$ in $\cover(P)$ is \emph{directed} from $x_1$ to $x_n$, if $x_1<\cdots<x_n$ in $P$.
The \emph{length} of a directed path is the number of its vertices.

A \emph{linear extension} $L$ of $P$ is a poset on the points of $P$ such that the points are pairwise comparable in $L$, and whenever $x \leq y$ in $P$ then $x\leq y$ in $L$.
The \emph{dimension} of $P$, denoted by $\dim(P)$, is the least number $d$ of linear extensions $L_1,\ldots,L_d$ of $P$, such that $x\leq y$ in $P$ if and only if $x\leq y$ in $L_i$ for each $i \in \{1,\ldots,d\}$. 

We let $\Inc(P)=\{(x,y) \mid x, y\in P \textrm{ and } x \textrm{ is incomparable to $y$ in } P\}$ 
denote the set of ordered pairs of incomparable points in $P$.
We say that a point $x\in P$ is \emph{minimal} (\emph{maximal}) if there is no $z\in P$ with $z<x$ in $P$ ($x<z$ in $P$).
We denote by $\Min(P)$ the set of minimal points in $P$ and by $\Max(P)$ the set of maximal points in $P$.
The \emph{downset} of a set $S \subseteq P$ of points is defined as $\D(S)=\set{x\in P\mid \exists s\in S \text{ such that } x\leq s \text{ in }P}$, and similarly we define the \emph{upset} of $S$ to be $\Up(S)=\set{x\in P\mid \exists s\in S\text{ such that }s\leq x\text{ in }P}$. 
For $S=\set{s}$, we write in short $\Up(s)$ and $\D(s)$ instead of $\Up(\set{s})$ and $\D(\set{s})$, respectively.

The \emph{height} of a point $p$ in $P$, denoted by $\height(p)$, is the largest $h$ such that there are $x_1,\ldots,x_h\in P$ with $x_1<\cdots<x_h=p$ in $P$.
Thus, the height of every minimal point in $P$ is $1$.
The \emph{height} of a poset $P$ is the maximum height of its points.
A poset $P$ is $(k+k)$-\emph{free} if it does not contain points $a_1,\ldots,a_k,b_1,\ldots,b_k$ such that $a_1<\cdots<a_k$ in $P$, $b_1<\cdots<b_k$ in $P$, and $a_i$ is incomparable to $b_j$ for each $i,j\in[k]$.

A set $I \subseteq \Inc(P)$ of incomparable pairs is \emph{reversible} if there is a linear extension
$L$ of $P$ with $y<x$ in $L$ for every $(x,y)\in I$. 
Rephrasing the definition of dimension, $\dim(P)$ is the least positive
integer $d$ for which there exists a partition of $\Inc(P)$ into $d$ reversible sets.
An \emph{alternating cycle} in $P$ is a sequence of $r\geq 2$ pairs $(x_1,y_1),\ldots,(x_r,y_r)$ from $\Inc(P)$, such that $x_i\le y_{i+1}$ in $P$ for each $i\in [r]$, where indices are taken cyclically (so we have $x_r\leq y_1$ in $P$).
We will use the following basic fact, originally observed by Trotter and Moore~\cite{TM77} in 1977.
\begin{obs}\label{obs:reversible}
For every poset $P$, a set $I\subset\Inc(P)$ is reversible if and only if $I$ contains no alternating cycle in $P$.
\end{obs}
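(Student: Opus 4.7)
The statement is an ``if and only if'', so I split it into two directions. The forward direction is immediate: assuming $L$ is a linear extension of $P$ witnessing reversibility of $I$, and supposing toward a contradiction that $(x_1,y_1),\ldots,(x_r,y_r)$ is an alternating cycle in $I$, the relations $x_i \leq y_{i+1}$ in $P$ give $x_i \leq y_{i+1}$ in $L$, and combined with $y_i < x_i$ in $L$ (from reversibility) and chained cyclically, this yields $y_1 < y_1$ in $L$, a contradiction.

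For the backward direction I need to construct a linear extension of $P$ reversing every pair in $I$. I would define a relation $R$ on $P$ by setting $aRb$ if there is a sequence $a = z_0, z_1, \ldots, z_\ell = b$ (some $\ell \geq 0$) such that for each $i$, either $z_i \leq z_{i+1}$ in $P$ or $(z_{i+1}, z_i) \in I$. By construction, $R$ is reflexive, transitive, extends $P$, and satisfies $yRx$ for every $(x,y)\in I$. Thus, once antisymmetry of $R$ is established, $R$ is a partial order, and any of its linear extensions provides the desired witness.

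The main obstacle is proving antisymmetry of $R$. Suppose $aRb$ and $bRa$ with $a \ne b$; concatenating the two witnessing sequences yields a closed walk of length $\geq 2$ in which each step is either a $P$-step or a ``reversal'' from $I$. First I would merge consecutive $P$-steps via transitivity in $P$. If the resulting walk has no reversal steps, then $a \leq b \leq a$ in $P$ forces $a = b$, contradiction. Otherwise, between any two consecutive reversal steps I would insert a trivial $P$-step (an element compared with itself). The walk is then an alternation of $r \geq 1$ reversals with $P$-paths, and it becomes precisely an alternating cycle. The case $r=1$ is excluded because a pair $(x_1,y_1) \in I$ has $x_1$ and $y_1$ incomparable in $P$, so no $P$-path from $x_1$ to $y_1$ can exist. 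Hence $r \ge 2$, contradicting the no-alternating-cycle hypothesis and completing the argument.
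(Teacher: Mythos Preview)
Your proof is correct and is essentially the standard argument for this classical fact. Note, however, that the paper does not actually supply a proof of this observation: it is stated as a basic fact and attributed to Trotter and Moore~(1977). So there is no ``paper's own proof'' to compare against; your write-up simply fills in what the paper takes for granted, and it does so along the expected lines (transitive closure of $P$ together with the reversed pairs, with antisymmetry failing exactly when an alternating cycle appears).
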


There is a number of standard observations showing that, in order to bound the dimension,  
we do not need to partition \emph{all} incomparable pairs into reversible sets 
but just a specific subset of these that are in a sense critical for the dimension. 
For our purposes, it is convenient to focus on min-max pairs.
An incomparable pair $(x,y)$ of a poset $P$ is a \emph{min-max pair}, if $x$ is minimal in $P$ and $y$ is maximal in $P$. 
The set of all min-max pairs in $P$ is denoted by $\Inc^*(P)$. 
If $\Inc^*(P)\neq \emptyset$ then define $\dim^*(P)$ as the least positive integer $t$ such that $\Inc^*(P)$ can be partitioned into $t$ reversible sets.
Otherwise, define $\dim^*(P)$ as being equal to $1$. 
The next observation, which is also standard, allows us to work with posets $P$ that have large $\dim^*(P)$.

\begin{obs}\label{obs:min-max-reduction}
For every poset $P$, there is a poset $Q$ such that
\begin{enumerate}
 \item $\height(Q) = \height(P)$,
 \item $\cover(Q)$ can be obtained from $\cover(P)$ by attaching vertices of degree $1$, and
 \item $\dim(P)\leq \dim^*(Q)$.
\end{enumerate}
\end{obs}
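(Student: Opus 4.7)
The plan is to build $Q$ by attaching two new degree-$1$ vertices to each ``interior'' vertex of $\cover(P)$: for every non-minimal point $x$ of $P$ we introduce a fresh point $m_x$ covered only by $x$, and for every non-maximal point $y$ of $P$ we introduce a fresh point $M_y$ covering only $y$. Then the $m_x$'s lie in $\Min(Q)$, the $M_y$'s lie in $\Max(Q)$, and condition~(ii) is immediate since each new point is adjacent to a unique point of $P$ in $\cover(Q)$.

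For condition~(i), the key step is to bound the length of any chain $C$ in $Q$. Such a $C$ contains at most one $m_x$, necessarily at its bottom, and at most one $M_y$, necessarily at its top. If $C = m_x < x < z_1 < \cdots < z_k < y < M_y$, then $x < z_1 < \cdots < z_k < y$ is a chain in $P$ of length $k+2$; since $x$ is non-minimal and $y$ is non-maximal in $P$, we may prepend one point and append one point inside $P$ to obtain a chain of length $k+4 = |C|$ in $P$. The remaining cases (only one or none of $m_x, M_y$ present) are analogous, giving $\height(Q) \leq \height(P)$; the reverse inequality is trivial.

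For condition~(iii), define $\phi \colon P \to \Min(Q)$ by $\phi(x) = x$ when $x \in \Min(P)$ and $\phi(x) = m_x$ otherwise, and $\psi \colon P \to \Max(Q)$ symmetrically. A short case analysis shows that for every $(x, y) \in \Inc(P)$, the pair $(\phi(x), \psi(y))$ lies in $\Inc^*(Q)$, using the fact that $\phi(x) \leq z$ in $Q$ holds exactly when $x \leq z$ in $P$ (or $z = M_w$ with $x \leq w$ in $P$), and symmetrically for $\psi(y)$. Now given any partition $R_1, \ldots, R_t$ of $\Inc^*(Q)$ into reversible sets with $t = \dim^*(Q)$, set $I_i = \{(x,y) \in \Inc(P) : (\phi(x), \psi(y)) \in R_i\}$. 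Any alternating cycle $(x_1,y_1), \ldots, (x_r, y_r)$ inside $I_i$ would lift via $\phi(x_j) \leq x_j \leq y_{j+1} \leq \psi(y_{j+1})$ in $Q$ to an alternating cycle in $R_i$, contradicting reversibility by Observation~\ref{obs:reversible}. Hence the $I_i$ partition $\Inc(P)$ into $t$ reversible sets, so $\dim(P) \leq t = \dim^*(Q)$.

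The only delicate point is the height bookkeeping in condition~(i): attaching minimal and maximal leaves could in principle extend chains, but restricting the construction to non-minimal $x$ and non-maximal $y$ is exactly what guarantees that every chain through a new leaf is already matched by an equally long chain inside $P$. Once this is in place, conditions~(ii) and~(iii) essentially fall out of the construction and the standard alternating-cycle criterion.
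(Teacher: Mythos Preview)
Your construction and argument are correct, and they follow essentially the same idea the paper sketches (attach fresh minimal and maximal leaves so that every incomparable pair in $P$ is witnessed by a min-max pair in $Q$). The paper only gives a one-line description---``attach a new minimal point and a new maximal point to every non-extreme point of $P$''---and defers the details to~\cite{JMTWW}; you attach $m_x$ to every non-minimal $x$ and $M_y$ to every non-maximal $y$, which is a slight superset of the paper's attachments, but this only makes the map $(x,y)\mapsto(\phi(x),\psi(y))$ easier to define and does not affect the height bookkeeping. Your verification of~(i) via extending chains using the non-minimality/non-maximality of the endpoints, and of~(iii) via lifting alternating cycles through $\phi,\psi$, is exactly the intended argument.
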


The proof idea for Observation~\ref{obs:min-max-reduction} is to build $Q$ by attaching a new minimal point and a new maximal point to every non-extreme point of $P$ (see~\cite{JMTWW} for details).

%

For the presentation of our argument, it is convenient to translate the dimension of a poset into the chromatic number of a certain hypergraph.
For a poset $P$ and a set $I\subset\Inc(P)$, consider the hypergraph $\calH(I)$ with vertex set $I$ and subsets $X\subset I$ forming an edge if the incomparable pairs in $X$ define an alternating cycle in $P$.
Then by Observation~\ref{obs:reversible}, the minimum number of colors needed for a vertex coloring of $\calH(\Inc(P))$ avoiding monochromatic edges is exactly the dimension of $P$. 
For two sets $A,B\subset P$, we define $\Inc(A,B)=\set{(a,b)\in \Inc(P) \mid a\in A,\ b\in B}$ and $\chi(A,B)=\chi(\calH(\Inc(A,B)))$.
With this definition we have $\chi(\Min(P),\Max(P))=\dim^*(P)$ when $\Inc^*(P)\neq\emptyset$.

In a moment we come to the key lemma (Lemma~\ref{lem:unrolling}) for our proof of Theorem~\ref{thm:main-thm}.
The lemma is based on a simple decomposition of minimal and maximal points of $P$, obtained by ``unfolding'' $P$.
This decomposition was first used by Streib and Trotter~\cite{ST14} and was also applied in~\cite{JMTWW,Wal15}.
We turn to its description now.

Suppose $P$ is \emph{connected}, that is, the cover graph of $P$ is connected.
Let $A=\Min(P)$ and $B=\Max(P)$.
Choose arbitrarily $a_0\in A$ and set $A_0=\set{a_0}$.
For $i=1,2,\ldots$ let
 \begin{align*}
  B_i&=\Big\{ b\in B- \bigcup_{1\leq j<i}B_j\mid \text{ there is }a\in A_{i-1}\text{ with }a\leq b\text{ in }P\Big\},\\
  A_i&=\Big\{a\in A- \bigcup_{1\leq j<i}A_j\mid \text{ there is }b\in B_i\text{ with }a\leq b\text{ in }P\Big\}.
 \end{align*}
Let $m$ be the least index with $A_m$ being empty.
Since $P$ is connected, the sets $A_0,\ldots,A_{m-1}$ partition $A$ and the sets $B_1,\ldots,B_m$ partition $B$.
We say that the sequence $A_0,B_1,\ldots,A_{m-1},B_m$ is obtained by \emph{unfolding} $P$ \emph{from} $a_0$.
See an illustration of this decomposition at the top of Figure~\ref{fig:unrolling}.
Also note a useful property of this construction:
\begin{gather}
\begin{align*}
\textrm{for every $a\in A_i$ and $b\in B$ with $a\leq b$ in $P$, we have $b\in B_i \cup B_{i+1}$,}\\
\textrm{for every $b\in B_i$ and $a\in A$ with $a\leq b$ in $P$, we have $a\in A_{i-1} \cup A_{i}$.}
\end{align*}\tag{$\star$}\label{unrolling-property}
\end{gather}
The following lemma intuitively says that each sequence obtained by unfolding a poset contains a ``heavy part'' with respect to dimension.

\begin{figure}[t]
 \centering
 \includegraphics[scale=0.7]{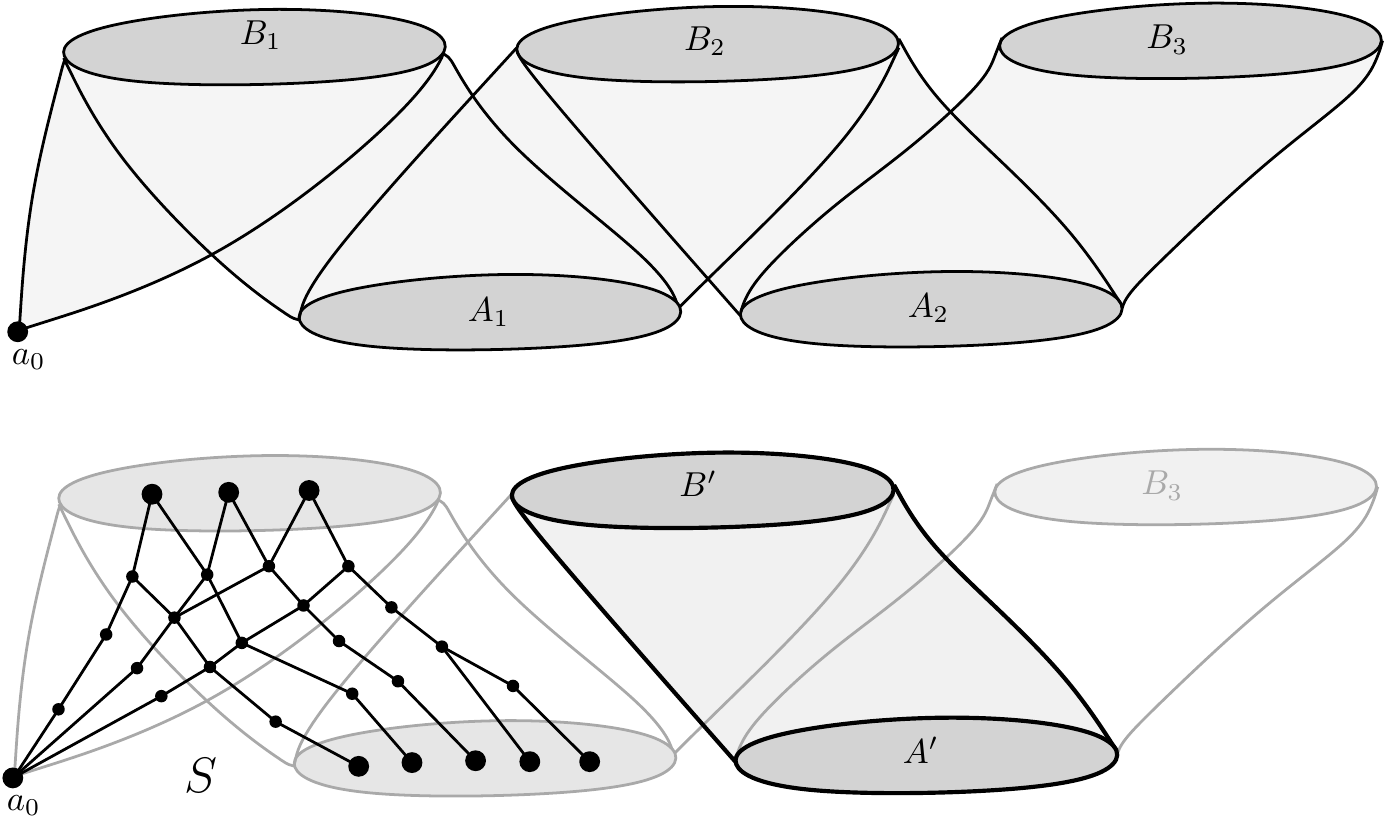}
 \caption{``Unfolding'' the poset $P$ (top figure). Illustration for the definition of $A'$, $B'$, and $S$ for the case $\chi(A_2,B_2)\geq N/2$. (bottom figure)}
 \label{fig:unrolling}
\end{figure}

\begin{lemma}\label{lem:unrolling-chi}
 Let $P$ be a connected poset and $A=\Min(P)$, $B=\Max(P)$ with $\chi(A,B)\geq 3$.
 Consider a sequence $A_0,B_1,\ldots,A_{m-1},B_m$ obtained by unfolding $P$. 
 Then there is $\ell\in[m-1]$ such that
 \[
  \chi(A_{\ell},B_{\ell})\geq \chi(A,B)/2,\quad \textrm{ or }\quad \chi(A_{\ell},B_{\ell+1})\geq \chi(A,B)/2.
 \]
\end{lemma}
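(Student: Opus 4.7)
The plan is to prove the contrapositive. Set $N = \chi(A, B) \geq 3$ and assume that $\chi(A_\ell, B_\ell) < N/2$ and $\chi(A_\ell, B_{\ell+1}) < N/2$ for every $\ell \in [m-1]$. I will construct a proper coloring of $\calH(\Inc(A, B))$ with strictly fewer than $N$ colors, contradicting $\chi(A, B) = N$. Note that $\chi(A_0, B_1) \leq 1$ comes for free, since $A_0 = \set{a_0}$ is a singleton and no alternating cycle can involve only $a_0$ on the minimal side.

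The construction partitions $\Inc(A, B)$ at the diagonal into
\[
L = \set{(x, y) \in \Inc(A_i, B_j) : j \leq i}
\quad\text{and}\quad
R = \set{(x, y) \in \Inc(A_i, B_j) : j \geq i+1}.
\]
The key claim is that every alternating cycle in $L$ lies inside a single block $A_\ell \times B_\ell$, and every alternating cycle in $R$ lies inside a single block $A_\ell \times B_{\ell+1}$. For the $L$-claim, take any alternating cycle $(x_1, y_1), \ldots, (x_r, y_r)$ in $L$, writing $x_k \in A_{i_k}$ and $y_k \in B_{j_k}$. Property~$(\star)$ applied to $x_k \leq y_{k+1}$ yields $j_{k+1} \in \set{i_k, i_k+1}$, so $j_{k+1} \geq i_k \geq j_k$. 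The sequence $(j_k)$ is thus cyclically non-decreasing, hence constant, say $j_k = j$. Then $i_k \in \set{j-1, j}$ (from $j_{k+1} = j$) together with $i_k \geq j$ (from $L$) forces $i_k = j$ for all $k$. The $R$-claim is symmetric: property~$(\star)$ makes $(j_k)$ cyclically non-increasing and therefore constant.

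With the claim in hand, color $L$ by selecting, for each $\ell \in [m-1]$, a proper coloring of $\Inc(A_\ell, B_\ell)$ using at most $\lceil N/2 \rceil - 1$ colors, and reusing a single palette $C_L$ of that size across all $\ell$. Any monochromatic alternating cycle in $L$ lies in some $A_\ell \times B_\ell$ by the claim, where the local coloring already avoids monochromatic alternating cycles; so the combined coloring of $L$ is proper. Color $R$ analogously with a disjoint palette $C_R$ of size $\lceil N/2 \rceil - 1$, handling the boundary case $\ell = 0$ via $\chi(A_0, B_1) \leq 1 < N/2$. The resulting coloring uses at most $2(\lceil N/2 \rceil - 1) < N$ colors, the desired contradiction.

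The one delicate step is spotting the correct split at the diagonal: the partition $j \leq i$ versus $j > i$ is precisely what lets property~$(\star)$ force the sequence $(j_k)$ along an alternating cycle to be cyclically monotone, collapsing each such cycle into a single local block. Once that observation is made, the rest is a routine color-counting argument.
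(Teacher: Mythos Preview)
Your proof is correct and follows essentially the same approach as the paper: both arguments use property~$(\star)$ to show that alternating cycles within the ``left'' half ($j\le i$) are confined to a single block $A_\ell\times B_\ell$ and those within the ``right'' half ($j\ge i+1$) to a single block $A_\ell\times B_{\ell+1}$, and then assemble a global coloring from the local ones. One small omission in the write-up: your coloring of $L$ is only specified on the diagonal blocks $\Inc(A_\ell,B_\ell)$, leaving pairs with $j<i$ uncolored (and similarly in $R$ for $j>i+1$); but since your claim already shows such pairs lie on no alternating cycle in $L$ (resp.\ $R$), assigning them any color from $C_L$ (resp.\ $C_R$) completes the argument.
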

\begin{proof}
 Let $\chi=\chi(A,B)$ and $\chi_1=\max_{i\in[1,m-1]}\chi(A_i,B_i)$ as well as $\chi_2=\max_{i\in[1,m-1]}\chi(A_{i},B_{i+1})$.
All we have to show is that
\begin{align}
\chi_1\geq \chi/2\quad\textrm{ or }\quad \chi_2 \geq \chi/2.\label{eq:chi-ineq}
\end{align}
Let $\phi_{i}^{i}$ be a coloring of the hypergraph $\calH(\Inc(A_i,B_i))$ using colors from the set $[\chi_1]$, for $i\in[1,m-1]$.
Let $\phi_{i}^{i+1}$ be a coloring of the hypergraph $\calH(\Inc(A_i,B_{i+1}))$ using colors from the set $[\chi_1+1,\chi_1+\chi_2]$, for $i\in[0,m-1]$.
Combining these colorings we are going to construct a coloring $\phi$ of $\calH(\Inc(A,B))$.
We define two additional distinct colors $c$ and $c'$, and we specify their integer value later on.
We define $\phi$ as follows: for $(a,b)\in\Inc(A,B)$ let
\[
\phi(a,b)=
\begin{cases}
\phi_i^i(a,b) &\mbox{if } a\in A_i,\ b\in B_i, \mbox{ for $i\in[1,m-1]$,}\\
\phi_i^{i+1}(a,b) &\mbox{if } a\in A_i,\ b\in B_{i+1}, \mbox{ for $i\in[1,m-1]$,}\\
c & \mbox{if } a\in A_i,\ b\in B_{j}, \mbox{ for $i,j\in[0,m-1]$ such that $i>j$,}\\
c' & \mbox{if } a\in A_i,\ b\in B_{j}, \mbox{ for $i,j\in[1,m]$ such that $i+1<j$.}\\
\end{cases}
\]

We split our argument in two cases now.
First, we deal with the important case that $\chi_1>0$ and $\chi_2>0$. 
We complete our description of $\phi$ by arbitrarily choosing $c\in [\chi_1]$ and $c'\in[\chi_1+1,\chi_1+\chi_2]$ in this case.
Next, we aim to show that $\phi$ is a proper coloring of $\calH(\Inc(A,B))$.

Suppose for a contradiction that this is not true.
Then there is an alternating cycle $(a_1,b_1),\ldots,(a_r,b_r)$ and a color $c''\in[\chi_1+\chi_2]$, such that $\phi(a_i,b_i)=c''$ for each $i\in[r]$.
We are going to argue for the case $c''\in[\chi_1]$. 
The other case $c''\in[\chi_1+1,\chi_1+\chi_2]$ is symmetric.
First note that for each incomparable pair $(a,b)$ with $a\in A_{i}$, $b\in B_j$, and $\phi(a,b)\in [\chi_1]$, we have $i \geq j$.
Let $a_i\in A_{\ell_i}$ and $b_i\in B_{t_i}$ for $i\in[r]$.
Since $a_i\leq b_{i+1}$ in $P$ (indices are taken cyclically in $[r]$), the \eqref{unrolling-property}-property implies $t_{i+1}\in\set{\ell_i,\ell_i+1}$.
Given $\phi(a_{i+1},b_{i+1})\in[\chi_1]$, we know that $\ell_{i+1}\geq t_{i+1}$.
Therefore, we have $\ell_i\leq t_{i+1} \leq \ell_{i+1}$ and this holds cyclically for $i\in[r]$.
This means that all these values are the same, implying that there is $\ell\in[m-1]$ with $a_i\in A_{\ell}$ and $b_i\in B_{\ell}$ for all $i\in[r]$.
However, the coloring $\phi$ for incomparable pairs in $\Inc(A_\ell,B_\ell)$ agrees with the coloring $\phi_{\ell}^{\ell}$.
Since $\phi_{\ell}^{\ell}$ is proper for $\calH(\Inc(A_\ell,B_\ell))$, our alternating cycle cannot be monochromatic, which is a contradiction.
This proves that $\phi$ indeed yields a proper coloring.
Therefore we have $\chi_1+\chi_2\geq \chi$ and hence~\eqref{eq:chi-ineq} holds in this case.

Now we deal with the case that $\chi_1$ or $\chi_2$ is $0$. (Note that $\chi(X,Y)=0$ if all points from $X$ are below all points from $Y$ in $P$.)
If $\chi_1=\chi_2=0$  then $\phi$ is a proper $2$-coloring of $\calH(\Inc(A,B))$, contradicting $\chi(A,B)\geq 3$.
If $\chi_1>0$ and $\chi_2=0$, then setting $c \in [\chi_1]$ and $c'=\chi_1+1$ we get that $\phi$ is a proper $(\chi_1+1)$-coloring of $\calH(\Inc(A,B))$, 
so $\chi_1 \geq \chi - 1\geq \chi/2$ (as $\chi\geq3$).
The case of $\chi_1=0$ and $\chi_2>0$ goes analogously.
This completes the proof.
\end{proof}

As noted before, Lemma~\ref{lem:unrolling-chi} tells us that when we unfold a poset $P$, then there is ``heavy part'' of it witnessing a large fraction of $P$'s dimension.
In the next lemma we consider the portion of points appearing before this ``heavy part'' and fix a suitable subset of it.
The drawing at the bottom of Figure~\ref{fig:unrolling} will help to understand the statement.

\begin{lemma}\label{lem:unrolling}
For every poset $P$ and sets $A\subset\Min(P)$, $B\subset\Max(P)$ with $\chi(A,B)\geq3$, there are sets $A'\subset A$, $B'\subset B$, and $S\subset \Up(A)\cap \D(B)$ such that
 \begin{enumerate}
  \item\label{item:S-connected} $S$ is connected in $\cover(P)$,
  \item\label{item:chi-inequ} $\chi(A',B')\geq \chi(A,B)/2$,
  \item\label{item:S-support} either $A'\cap\D(S)=\emptyset$, $B'\subset \Up(S)$, or $A'\subset \D(S)$, $B'\cap\Up(S)=\emptyset$.
 \end{enumerate}
\end{lemma}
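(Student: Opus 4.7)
The plan is to apply Lemma~\ref{lem:unrolling-chi} to a suitable restricted unfolding and then build $S$ from the cover-graph paths realising the zigzag comparabilities produced by it.

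First I reduce to a setting in which the unfolding from a single starting point covers all of $A$ and $B$. Call two elements of $A\cup B$ \emph{zigzag-equivalent} if they are joined by an alternating chain of comparabilities $a\le b\ge a'\le b'\ge\cdots$ in $P$ whose ``low'' entries lie in $A$ and ``high'' entries lie in $B$; this partitions $A\cup B$ into zigzag-components. An incomparable pair $(a,b)\in\Inc(A,B)$ whose endpoints lie in different zigzag-components can be two-coloured by the sign of the difference of their component indices (in any fixed total ordering of the components), and a monochromatic alternating cycle consisting only of such cross-component pairs would force a strictly monotone cyclic sequence of component indices, which is impossible. Hence some zigzag-component $(A^*,B^*)$ satisfies $\chi(A^*,B^*)\ge\chi(A,B)-2$, in particular of the same order as $\chi(A,B)$.

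Next, pick an arbitrary $a_0\in A^*$ and unfold from $a_0$ using the subsets $A^*$ and $B^*$ in place of $\Min(P)$ and $\Max(P)$, producing layers $A_0=\{a_0\},B_1,A_1,\ldots,A_{m-1},B_m$ that partition $A^*$ and $B^*$ (by zigzag-connectedness). The \eqref{unrolling-property}-property remains valid (with $B$ read as $B^*$), so the proof of Lemma~\ref{lem:unrolling-chi} transfers verbatim and yields $\ell\in[m-1]$ with either $\chi(A_\ell,B_\ell)\ge\chi(A^*,B^*)/2$ or $\chi(A_\ell,B_{\ell+1})\ge\chi(A^*,B^*)/2$.

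In the first case, set $A'=A_\ell$ and $B'=B_\ell$. For each $b\in B_\ell$ fix a witnessing zigzag
\[
a_0\le b^{(1)}\ge a^{(1)}\le b^{(2)}\ge a^{(2)}\le\cdots\le b^{(\ell-1)}\ge a^{(\ell-1)}
\]
with $a^{(i)}\in A_i$, $b^{(i)}\in B_i$, and $a^{(\ell-1)}\le b$, and realise each $\le$ by a directed path in $\cover(P)$. Let $S$ be the union of all these paths over $b\in B_\ell$, truncated so as not to include the final directed path from $a^{(\ell-1)}$ up to $b$. Then $S$ is connected because every zigzag contains $a_0$, and $S\subset\Up(A)\cap\D(B)$ because each of its points lies on a directed cover-graph path between a point of $A$ and a point of $B$. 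Since $a^{(\ell-1)}\in S$ and $a^{(\ell-1)}\le b$, we have $B'\subset\Up(S)$. For $A'\cap\D(S)=\emptyset$, observe that every $s\in S$ lies on a path ending at some $b^{(j)}\in B_j$ with $j\le\ell-1$, so $s\le b^{(j)}$; if some $a'\in A_\ell$ had $a'\le s$, then $a'\le b^{(j)}$ would, by the \eqref{unrolling-property}-property, force $a'\in A_{j-1}\cup A_j\subset A_{\le\ell-1}$, contradicting $a'\in A_\ell$. The second case is entirely symmetric: take $A'=A_\ell$, $B'=B_{\ell+1}$, and extend each zigzag one step further through a chosen $b^{(\ell)}\in B_\ell$ with $a\le b^{(\ell)}$ for the corresponding $a\in A_\ell$, so that $S$ stops at $b^{(\ell)}$; the analogous application of \eqref{unrolling-property} then gives $A'\subset\D(S)$ and $B'\cap\Up(S)=\emptyset$.

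The main obstacle is the initial reduction: keeping the factor $\tfrac{1}{2}$ intact through the cross-zigzag-component colouring. The additive loss of (at most) two colours is mild, but it must be reconciled with the hypothesis $\chi(A,B)\ge 3$, either by a slightly tighter colouring argument or by a separate treatment of the smallest values of $\chi(A,B)$. Once the reduction is set up, the construction of $S$ and the verification of its properties are a direct consequence of the unfolding and the \eqref{unrolling-property}-property.
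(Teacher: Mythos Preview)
Your overall strategy---reduce to a single zigzag-component, unfold, apply Lemma~\ref{lem:unrolling-chi}, and assemble $S$ from the cover-graph realisations of the zigzag comparabilities---is exactly the paper's approach. The construction of $S$ and the verification of items~\ref{item:S-connected} and~\ref{item:S-support} via the \eqref{unrolling-property}-property are correct and match the paper's argument (the paper phrases the reduction as passing to the induced subposet $Q$ on $\Up(A'')\cap\D(B'')$ and applies Lemma~\ref{lem:unrolling-chi} there, but this is only a cosmetic difference from applying the unfolding directly in $P$ as you do).

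The one genuine gap is the one you flag yourself: with your two-colouring of cross-component pairs you only obtain $\chi(A^*,B^*)\ge\chi(A,B)-2$, and after halving this does not give $\chi(A',B')\ge\chi(A,B)/2$; it also fails to guarantee $\chi(A^*,B^*)\ge 3$, which you need to invoke Lemma~\ref{lem:unrolling-chi}. This gap is real for the lemma as stated, but it closes completely once you notice that the two cross-component colours can be \emph{merged} into the in-component palette rather than kept separate. Writing $k_i$ for the component index of $a_i$ in an alternating cycle, you have $c(b_i)=k_{i-1}$ (cyclically), and a pair $(a_i,b_i)$ contributes $k_i=k_{i-1}$ if in-component, $k_i>k_{i-1}$ if cross-component of positive sign, and $k_i<k_{i-1}$ otherwise. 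Hence a monochromatic cycle under a colouring that assigns colour~$1$ to all positive-sign cross pairs and colour~$2$ to all negative-sign cross pairs (reusing colours~$1,2$ from the in-component palettes) has all $k_i$ weakly monotone cyclically, so constant, so every pair in it is in fact in-component and lies in a single component---where the component's own proper colouring forbids it. This yields $\chi(A,B)\le\max\bigl(2,\max_K\chi(A_K,B_K)\bigr)$, and since $\chi(A,B)\ge 3$ you get $\chi(A^*,B^*)=\chi(A,B)$ on the nose. The paper sweeps this under the phrase ``it is well known that the dimension\ldots is witnessed by\ldots a single component'', but the content is precisely this merging argument.
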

\begin{proof}
First, using $\chi(A,B)\geq3$ we rule out some trivial cases.
It is well known that the dimension of a poset (not being a union of chains) is witnessed by the dimension of a subposet that is induced by elements contained in a single component of the cover graph.
Translated to our setting, we may assume that there are sets $A''\subset A$ and $B''\subset B$ such that 
$\chi(A'',B'')=\chi(A,B)$, the set $\Up(A'')\cap\D(B'')$ is connected in $\cover(P)$, and $A''\subset \D(B'')$, $B''\subset\Up(A'')$.
Define $Q$ to be the poset that is induced on the points of $\Up(A'')\cap\D(B'')$.

Now choose $a_0\in A''$ arbitrarily and let $A_0,B_1,\ldots,A_{m-1},B_m$ be a sequence obtained by unfolding $Q$ from $a_0$.
By Lemma~\ref{lem:unrolling-chi} there is $\ell\in[m-1]$ such that
\[
 \chi(A_{\ell},B_{\ell})\geq \chi(A'',B'')/2,\quad \textrm{ or }\quad \chi(A_{\ell},B_{\ell+1})\geq \chi(A'',B'')/2\quad\textrm{in }Q.
\]

In the first case we will find a set $S$ fulfilling the first part of item~\ref{item:S-support} and in the second case we will find a set $S$ satisfying the second part of item~(iii).

Suppose first that $\chi(A_{\ell},B_{\ell})\geq \chi(A'',B'')/2$ in $Q$.
Since $Q$ is an induced subposet of $P$, the graph $\mathcal{H}(\Inc(A_\ell,B_\ell))$ with respect to $P$ is equal to $\mathcal{H}(\Inc(A_\ell,B_\ell))$ with respect to $Q$.
Similarly, this holds for $\mathcal{H}(\Inc(A'',B''))$, and hence the inequality $\chi(A_{\ell},B_{\ell})\geq \chi(A'',B'')/2$ also holds with respect to $P$.
Set $A'=A_\ell$ and $B'=B_\ell$.

Now we head for a definition of the set $S$.
By the construction, for every $a\in A_{\ell-1}$ we can fix a path $S(a)$ connecting $a$ and $a_0$ in $\cover(P)$ that is using only points from $\bigcup_{i\in[0,\ell-1]}\Up(A_i)\cap\bigcup_{i\in[1,\ell-1]}\D(B_i)$.
Then we define
\[
 S=\bigcup\nolimits_{a\in A_{\ell-1}}S(a).
\]
See Figure~\ref{fig:unrolling} for an illustration of this definition.
Clearly, $S\subset \Up(A)\cap \D(B)$ and since $a_0$ is contained in $S(a)$ for each $a\in A_{\ell-1}$, the set $S$ is connected in $\cover(P)$.
This proves item~\ref{item:S-connected}.

Now we show that $A'\cap\D(S)=\emptyset$. 
Suppose to the contrary that there is $a'\in A'$ and $s\in S$ with $a \leq s$ in $P$.
Then $s\in S$ implies $s\leq b$ for some $b\in B_i$ with $i\in [1,\ell-1]$, 
and therefore $a' \leq b$ in $P$, which contradicts the \eqref{unrolling-property}-property.
Note also that $B'=B_{\ell} \subset \Up(A_{\ell-1}) \subset \Up(S)$.
This proves item~\ref{item:S-support}.

We are left with the case that $\chi(A_{\ell},B_{\ell+1})\geq \chi(A'',B'')/2$ in $Q$ (and hence in $P$).
We set $A'=A_\ell$ and $B'=B_{\ell+1}$.
Similarly to the previous case, for every $b\in B_{\ell}$ we can fix a path $S(b)$ connecting $b$ and $a_0$ in $\cover(P)$ that is using only points from $\bigcup_{i\in[0,\ell-1]}\Up(A_i)\cap\bigcup_{i\in[1,\ell]}\D(B_i)$.
Then we define
\[
 S=\bigcup\nolimits_{b\in B_{\ell}}S(b).
\]

The properties required for $A'$, $B'$ and $S$ follow along the same lines as for the first case.
This concludes the proof.
\end{proof}

\section{Proof of Theorem~\ref{thm:main-thm}}
Here is an outline of the proof.
Given a poset $P$ with large dimension but bounded height, we are going to construct a subdivision of $K_n$ in the cover graph of $P$.
First, in a preprocessing, we make sure that the dimension of $P$ is witnessed by min-max pairs.
Then we will run through two phases.
In Phase~1 we are going to set up two collections of disjoint sets in $P$ with each set being connected in $\cover(P)$.
We start with two empty collections and then we apply iteratively Lemma~\ref{lem:unrolling} to get new sets for the collections.
At the end of the Phase~1, one of the two collections will be large enough.
This collection is refined in Phase~2, where we fix $n$ points of $P$ that will be the principal vertices of a $K_n$ subdivision in $\cover(P)$.
We conclude Phase~2 with at least $\binom{n}{2}$ sets remaining in our collection.
Finally, we use the sets of the collection to connect the fixed vertices.
This will yield a subdivision of $K_n$.

We omit the trivial cases of the theorem and assume $n\geq3$ and $h\geq2$.
Suppose that $P$ is a poset with $\height(P)\leq h$ and
\begin{align*}
 \dim(P) &> n^L, \textrm{ where } L = 2\cdot\binom{M+h}{h}-1 \textrm{ and } M = \binom{n}{2}^{h^n}.
\end{align*}
First, we apply Observation~\ref{obs:min-max-reduction} to $P$ and obtain a poset $P'$ with $\height(P')\leq h$ and $\dim^*(P')>n^L$.
We are going to find a subdivision of $K_n$ in $\cover(P')$.
Since $\cover(P')$ is obtained by adding extra vertices of degree 1 to $\cover(P)$, this subdivision also exists in $\cover(P)$ (recall that $n\geq 3$).
For convenience, from now on we write $P$ instead of $P'$.

We continue with a description of Phase~1.
During Phase~1 we maintain an additional structure $(A,B,\calC,\calD)$ while running a loop.
After the $i$-th loop iteration we will have the following invariants:
\begin{enumeratea}
 \item $A\subset\Min(P)$, $B\subset\Max(P)$, and $\chi(A,B) > n^{L-i}$,\label{inv:A-B}
 \item $\calC$ is a collection of pairwise disjoint subsets of $P$ with $\norm{\calC}\leq M$ and\label{inv:C}
 \begin{enumerateNested}
  \item $C$ is connected in $\cover(P)$, for every $C\in\calC$,\label{inv:C-connected}
  \item $A \cap \D(C) =\emptyset$ and $B \subset \Up(C)$, for every $C\in\calC$,\label{inv:C-vs-A-and-B}
  \item $\D(c)\cap C=\emptyset$ for each singleton $c$ in $\calC$ and each $C\in\calC -\set{c}$,\label{inv:C-singletons}
 \end{enumerateNested}
 \item $\calD$ is a collection of pairwise disjoint subsets of $P$ with $\norm{\calD}\leq M$ and\label{inv:D}
 \begin{enumerateNested}
  \item $D$ is connected in $\cover(P)$, for every $D\in\calD$,
  \item $A\subset \D(D)$ and $B\cap\Up(D)=\emptyset$, for every $D\in\calD$,
  \item $\Up(d)\cap D=\emptyset$ for each singleton $d$ in $\calD$ and each $D\in\calD -\set{d}$.
 \end{enumerateNested}
\end{enumeratea}

We also have a measure of quality of the maintained structure.
For each $C\in\calC$ with $\norm{C}>1$, we let $\val(C)=h$, and if $C=\set{c}\in \calC$, we let $\val(C)$ be the length of the longest directed path in $\cover(P)$ from $c$ to any $b\in B$.
For each $D\in\calD$ with $\norm{D}>1$, we let $\val(D)=h$, and if $D=\set{d}\in\calD$, we let $\val(D)$ be the length of the longest directed path in $\cover(P)$ from any $a\in A$ to $d$.
Since the height of $P$ is at most $h$, $\val(X)\leq h$ holds for every $X\in \calC\cup\calD$.
Note also that $\val(C)$ and $\val(D)$ depend on the current sets $A$ and $B$ within the structure.
We define the value of a collection $\calX$ of subsets of $P$, denoted by $\val(\calX)$, to be the sequence of size $M$ sorted in a non-decreasing order with one entry $\val(X)$ for each $X\in \calX$, and with $M-\norm{\calX}$ positions filled with '$h+1$' values.
Note that $\val(\calC)$ and $\val(\calD)$ are sequences of length $M$ with sorted values from the set $\set{1,\ldots,h+1}$. 
Therefore, there are at most $\binom{M+h}{h}$ possible values for $\calC$ and $\calD$, respectively.
We say $\val(\calX') < \val(\calX)$, if there is an index $j\in[M]$ such that the first $j-1$ entries of $\val(\calX')$ and $\val(\calX)$ are the same, and the $j$-th entry of $\val(\calX')$ is smaller than the $j$-th entry of $\val(\calX)$.

During Phase 1 the values of the collections $\calC$ and $\calD$ will decrease.
In such a case we say that the quality of our maintained structure is improving.
Intuitively, a small value of $\calC$ is good since then the sets in $\calC$ are somehow close to all points in $B$, which makes it easier to construct a topological minor in the cover graph.

\subsection*{Phase 1: Updating the data structure}
We set up the initial structure as follows: 
$A=\Min(P)$, $B=\Max(P)$, and $\calC$ and $\calD$ are empty. 
Clearly conditions \ref{inv:A-B}-\ref{inv:D} hold for $i=0$.
Note that we start with $\val(\calC)$ and $\val(\calD)$ being the sequence with $M$ entries of '$h+1$'.

Now we run a loop to improve the quality of the data structure.
In each iteration we ask up to three questions about the current structure $(A,B,\calC,\calD)$.
If we get only negative answers, then the loop will terminate and Phase 1 is done.
If we get a positive answer to one of the questions, then we finish the iteration by updating the structure to $(A',B',\calC',\calD')$ that is satisfying conditions \ref{inv:A-B}-\ref{inv:D} and additionally 
 \begin{align*}
\val(\calC')<\val(\calC) &\textrm{ and } \val(\calD')\leq\val(\calD), \textrm{ or }\\
\val(\calC')\leq\val(\calC) &\textrm{ and } \val(\calD')<\val(\calD).
\end{align*}

Since the number of values for the collection $\calC$ (and $\calD$, resp.) is bounded by $\binom{M+h}{h}$, the quality of our structure can be improved at most $L-1=2\binom{M+h}{h}-2$ times so that there will be at most $L$ iterations in total.

Now we are going to describe the $i$-th ($1\leq i \leq L$) iteration in detail.
Let $(A, B,\calC, \calD)$ be the current structure.
Hence it satisfies conditions \ref{inv:A-B}-\ref{inv:D} with respect to $i-1$.
The iteration starts with the evaluation of the following question:
\begin{align}\tag{Q1}\label{q:improving element}
\textrm{Is there a point $p\in P$ such that}
\end{align}
\begin{enumerate}
 \item $\chi(A, B \cap \Up(p)) > n^{L-i}$, and
 \item there is $C\in\calC$ and $c\in C$ such that $c< p$ in $P$?
\end{enumerate}

First, suppose the answer is 'yes' and fix such a point $p\in P$.
In this case we finish the $i$-th iteration by updating the structure to $(A',B',\calC',\calD')$, where
\begin{align*}
A'&=A-\D(p),\ B'=B \cap \Up(p),\\
\calC'&=\calC\cup \set{p}- \set{C\in \calC\mid p\in\Up(C)},\ \textrm{ and }\ \calD'=\calD.
\end{align*}
See Figure~\ref{fig:singleton} for a visualization of the sets $A'$, $B'$.

\begin{figure}[t]
 \centering
 \includegraphics[scale=1.0]{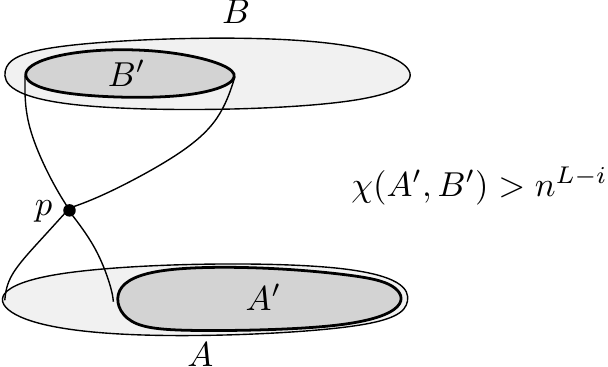}
 \caption{Definition of $A'$ and $B'$ in the case of a 'yes' answer for question~\eqref{q:improving element}.}
 \label{fig:singleton}
\end{figure}

\begin{claim}
 The structure $(A',B',\calC',\calD')$ satisfies the invariants \ref{inv:A-B}-\ref{inv:D}.
 Moreover, $\val(\calC')<\val(\calC)$ and $\val(\calD')\leq\val(\calD)$.
\end{claim}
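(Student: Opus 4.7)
The plan is to check invariants (a)--(c) for the updated structure in order, then compare the $\val$ sequences. The verifications for (a), (c), and most of (b) are routine monotonicity; the real work lies in one case of (b.iii) and in the strict value decrease for $\calC$.

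For (a), one observes that every $(a,b)\in(A\cap\D(p))\times B'$ is comparable (since $a\leq p\leq b$), so $\Inc(A',B')=\Inc(A,B\cap\Up(p))$ and hence $\chi(A',B')>n^{L-i}$ by Q1(i). For (c) with $\calD'=\calD$, the containments $A'\subset A$ and $B'\subset B$ preserve all the quantified properties, and singleton values of $\calD$ (longest directed paths from $A$ to the singleton) can only shrink, giving $\val(\calD')\leq\val(\calD)$. For (b), pairwise disjointness holds because every $C\in\calC$ containing $p$ is removed (having $p\in\Up(C)$); connectivity of $\{p\}$ is trivial; and (b.ii) for $\{p\}$ follows directly from $A'\cap\D(p)=\emptyset$ and $B'\subset\Up(p)$.

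The first delicate step is invariant~(b.iii) for the pair consisting of a kept singleton $\{c'\}$ and the new singleton $\{p\}$: one must show $p\notin\D(c')$. Before anything else I note $\{p\}\notin\calC$ originally, otherwise the old~(b.iii) applied to $\{p\}$ would force $\D(p)\cap C=\emptyset$ for every other $C\in\calC$, contradicting Q1(ii); in particular $p\neq c'$. If $p<c'$ held, the Q1 witness $c^*\in C^*\in\calC$ with $c^*<p$ would give $c^*<c'$, placing $c^*\in\D(c')\cap C^*$ and contradicting the old~(b.iii) applied to the singleton $\{c'\}$. The opposite direction, the new singleton $\{p\}$ against a kept set $C$, is immediate: $p\notin\Up(C)$ directly gives $\D(p)\cap C=\emptyset$.

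The hard part is the strict inequality $\val(\calC')<\val(\calC)$. Let $L_p$ denote the length of the longest directed cover-path from $p$ to any $b\in B$; since such paths automatically end in $\Up(p)$, this equals the new value $\val(\{p\})$ measured against $B'$. The Q1 witness $c^*<p$ forces $\height(p)\geq 2$ and therefore $L_p\leq h-1$. I will show that every removed set of $\calC$ has value strictly greater than $L_p$: a removed non-singleton has value $h>L_p$; for a removed singleton $\{c\}$ the fact $\{p\}\notin\calC$ forces $c<p$ strictly, and prepending a cover-path from $c$ to $p$ (of length at least $2$) to a longest cover-path from $p$ to $B$ gives $\val(\{c\})\geq L_p+1$. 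Kept singletons can only lose value as $B$ shrinks to $B'$, so the new multiset of values $M_{\mathrm{new}}$ arises from $M_{\mathrm{old}}$ by deleting entries all strictly above $L_p$, adding one entry equal to $L_p$, and weakly decreasing the rest. Comparing cumulative counts $f(\cdot,t)=|\{x\leq t\}|$, one has $f(M_{\mathrm{new}},t)\geq f(M_{\mathrm{old}},t)$ for all $t<L_p$ with a strict jump at $t=L_p$; strictness at the first threshold where the two counts disagree is precisely the sorted lexicographic strict inequality, yielding $\val(\calC')<\val(\calC)$.
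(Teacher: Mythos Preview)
Your proof is correct and follows essentially the same approach as the paper's. The only cosmetic difference is that you justify $\val(\calC')<\val(\calC)$ via a cumulative-count argument on the sorted multisets, whereas the paper simply observes that $\val(\{p\})$ is strictly smaller than the value of every removed set and that kept values cannot increase; you also leave $|\calC'|\le M$ implicit, but it follows at once from your observation that the Q1 witness set $C^*$ is removed.
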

\begin{proof}
Clearly, $A'\subset A$ is a set of minimal points and $B'\subset B$ is a set of maximal points in $P$. Since the answer for \eqref{q:improving element} is 'yes'
and all points in $A\cap \D(p)$ are below all points in $B\cap \Up(p)$,
we have $\chi(A',B') = \chi(A, B \cap \Up(p)) > n^{L-i}$, so \ref{inv:A-B} holds.

The set $\set{p}$ cannot be contained in $\calC$.
Indeed, otherwise item (ii) of question \eqref{q:improving element} would yield a contradiction to invariant \ref{inv:C-singletons} for $\calC$.
Thus, $\set{p}$ is a new set in $\calC'$ compared to $\calC$.
In order to prove that the sets in $\calC'$ are pairwise disjoint we only need to argue that $p\not\in C$ for every $C\in\calC'-\set{p}$.
But this follows immediately from the definition of $\calC'$.
In order to prove $\norm{\calC'}\leq M$, note that $p$ witnesses the positive answer for \eqref{q:improving element}, so there are $C\in\calC$ and $c\in C$ with $c< p$ in $P$, and therefore $C\not\in\calC'$.
This implies $\norm{\calC'}\leq\norm{\calC}\leq M$.

Item \ref{inv:C-connected} trivially holds.
Item \ref{inv:C-vs-A-and-B} for old sets in $\calC'$ follows immediately from the same invariant for $\calC$, and for $\set{p}$ it follows from the definition of sets $A'$ and $B'$.
For item \ref{inv:C-singletons}, observe first that $\D(p)\cap C=\emptyset$ for every $C\in\calC'-\set{p}$ by the definition of $\calC'$.
We also have to argue that $p\not\in\D(c')$ for each singleton $c'$ in $\calC'-\set{p}\subset\calC$.
Suppose to the contrary that $p\leq c'$ in $P$.
Recall that there is $C\in\calC$ and $c\in C$ with $c<p$.
However, this implies $c<c'$ in $P$, contradicting \ref{inv:C-singletons} for $\calC$.
This completes the verification of \ref{inv:C} for $\calC'$.
Since $\calD'=\calD$ and $A'\subset A$, $B'\subset B$, condition \ref{inv:D} still holds.

Now we show that the quality is improving, that is $\val(\calC')<\val(\calC)$ and $\val(\calD')\leq\val(\calD)$.
The values of sets in $\calC\cap \calC'$ can only decrease as $B'\subseteq B$.
Thus, to show $\val(\calC')<\val(\calC)$ it is enough to argue that $\val(\set{p})$ is smaller than the value of each set removed from $\calC$.
So let $C\in\calC$ such that $p\in\Up(C)$.
If $|C|>1$ then we have $\val(C)=h$ by our definition.
Note that there is no directed path of length $h$ in $\cover(P)$ starting from $p$, since otherwise $p$ must be minimal in $P$, which is not true by item (ii) of question~\eqref{q:improving element}.
Hence $\val(\set{p})\leq h-1<\val(C)$.
If $|C|=1$, that is $C=\set{c}$ for some $c\in P$, then recall that $p$ is not a singleton of $\calC$ and hence $p\neq c$.
Therefore, $p\in\Up(C)$ implies $c<p$ in $P$, which in turn yields $\val(\set{p})<\val(\set{c})$.

Finally, $\val(\calD')\leq\val(\calD)$ holds as $\calD'=\calD$ and $A'\subseteq A$.
This completes the verification of the invariants for the updated structure $(A',B',\calC',\calD')$ in the case of a 'yes' answer to question~\eqref{q:improving element}.
\end{proof}

If the answer for question~\eqref{q:improving element} is 'no', then the procedure continues with a dual question:
\begin{align}\tag{Q2}\label{q:dual improving element}
\textrm{Is there a point $p\in P$ such that}
\end{align}
\begin{enumerate}
 \item $\chi(A\cap\D(p),B) > n^{L-i}$, and
 \item there is $D\in\calD$ and $d\in D$ such that $p < d$ in $P$?
\end{enumerate}
If the answer for~\eqref{q:dual improving element} is 'yes', then we improve the current structure analogously to the 'yes'-case of question~\eqref{q:improving element}. We finish the $i$-th iteration by updating the structure to $(A',B',\calC',\calD')$, where
\begin{align*}
A'&=A\cap\D(p),\ B'=B-\Up(p),\\
\calC'&=\calC,\ \textrm{ and }\ \calD'=\calD\cup \set{p}-\set{D\in \calD\mid p\in\D(D)}.
\end{align*}
The proof that this new structure satisfies conditions \ref{inv:A-B}-\ref{inv:D} and that it improves the quality goes dually to the one for question~\eqref{q:improving element}.

If the answers for questions~\eqref{q:improving element} and~\eqref{q:dual improving element} are both 'no', then the procedure continues with a third question:
\begin{align}\tag{Q3}\label{q:C or D full}
\textrm{Are $\norm{\calC}<M$ and $\norm{\calD}<M$?}
\end{align}

Again, we first deal with the 'yes' answer.
In this case, we are going to show how to find a new candidate set to extend $\calC$ or $\calD$.
We apply Lemma~\ref{lem:unrolling} to sets $A$ and $B$ in $P$ (recall that $\chi(A,B)>n^{L-(i-1)}\geq3$) and get disjoint sets $A'\subseteq A$, $B'\subseteq B$, and $S\subseteq \Up(A)\cap \D(B)$ satisfying
\begin{enumerate}
 \item $S$ is connected in $\cover(P)$,
  \item $\chi(A',B')\geq \chi(A,B)/2$,
 \item either $A'\cap \D(S)=\emptyset$ and $B'\subset \Up(S)$, or $A'\subset \D(S)$ and $B'\cap \Up(S)=\emptyset$.
\end{enumerate}

First, we consider the case that in (iii) we have $A'\cap \D(S)=\emptyset$ and $B'\subset \Up(S)$.
In this case we finish the $i$-th iteration by updating the structure to $(A',B',\calC',\calD')$, where $\calC'=\calC\cup \set{S}$ and $\calD'=\calD$.

\begin{claim}
 The structure $(A',B',\calC',\calD')$ keeps the invariants~\ref{inv:A-B}-\ref{inv:D}.
 Moreover, $\val(\calC')<\val(\calC)$ and $\val(\calD')\leq\val(\calD)$.
\end{claim}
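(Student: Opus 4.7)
My plan is to verify invariants~\ref{inv:A-B}--\ref{inv:D} one by one for $(A',B',\calC',\calD')$, then separately argue the value drop for $\calC$. For~\ref{inv:A-B}, I would combine the inductive bound $\chi(A,B)>n^{L-i+1}$ coming from the $(i-1)$-st iteration with the guarantee $\chi(A',B')\geq\chi(A,B)/2$ from Lemma~\ref{lem:unrolling}, using $n\geq 3$ to obtain $\chi(A',B')>n^{L-i}$; minimality of $A'$ and maximality of $B'$ are free from $A'\subseteq A$ and $B'\subseteq B$. Invariant~\ref{inv:D} is a near-triviality: $\calD'=\calD$, and every condition there involving $A$ or $B$ is monotone under restriction to $A'\subseteq A$, $B'\subseteq B$.

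The substantive check is invariant~\ref{inv:C} for $\calC'=\calC\cup\set{S}$. The cardinality bound $|\calC'|\leq M$ is immediate from the positive answer to~\eqref{q:C or D full}. The key step is pairwise disjointness, which reduces to showing $S\cap C=\emptyset$ for every $C\in\calC$: if some $s\in S\cap C$ existed, the inclusion $S\subseteq\Up(A)$ from Lemma~\ref{lem:unrolling} would supply $a\in A$ with $a\leq s\in C$, placing $a\in A\cap\D(C)$ and contradicting invariant~\ref{inv:C-vs-A-and-B} for $C$. Connectedness of $S$ and the separation condition $A'\cap\D(S)=\emptyset$, $B'\subseteq\Up(S)$ are exactly what Lemma~\ref{lem:unrolling} delivers in this branch, and the analogous condition for old $C\in\calC$ persists because $A'\subseteq A$ and $B'\subseteq B$. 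I expect the singleton condition~\ref{inv:C-singletons} to be the main obstacle: for an old singleton $\set{c}\in\calC$ tested against $S$, any $s\in S\cap\D(c)$ would again give some $a\in A$ with $a\leq s\leq c$, so $a\in A\cap\D(\set{c})$, contradicting~\ref{inv:C-vs-A-and-B}; in the degenerate case that $S$ is itself a singleton, the construction in Lemma~\ref{lem:unrolling} forces $S=\set{a_0}$ with $a_0$ minimal, whence $\D(a_0)=\set{a_0}$, and the same logic rules out $a_0\in C$ for any $C\in\calC$.

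For the value drop, $\val(\calD')\leq\val(\calD)$ follows because singleton values in $\calD$ are lengths of longest directed paths starting from some $a\in A$, and shrinking $A$ to $A'$ cannot make them longer. For $\val(\calC')<\val(\calC)$, the clean observation is that $\val(S)\leq h$ (the height of $P$ is at most $h$), while $|\calC|<M$ forces the sorted tuple $\val(\calC)$ to contain at least one entry equal to $h+1$. Inserting $\val(S)$ and removing one $h+1$ strictly decreases the sorted tuple at the first position where it previously held an $h+1$, and any further decrease in singleton values of old $C\in\calC$ (caused by $B'\subseteq B$) only reinforces the drop.
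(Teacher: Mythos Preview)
Your argument is correct and tracks the paper's proof closely for invariants~\ref{inv:A-B}, \ref{inv:D}, disjointness, connectedness, \ref{inv:C-vs-A-and-B}, and the value comparison. The one place you diverge is the subcase of~\ref{inv:C-singletons} where $S$ happens to be a singleton: you open the black box of Lemma~\ref{lem:unrolling} and argue that its construction forces $S=\set{a_0}$ with $a_0\in\Min(P)$. That is true given how the lemma is proved, but it is not part of the lemma's \emph{statement}, so strictly speaking you are relying on an implementation detail; if the lemma were later re-proved differently, your argument here would need revisiting.

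The paper handles this subcase without peering inside Lemma~\ref{lem:unrolling}. If $S=\set{s}$ and some $z\in\D(s)\cap C$ exists for $C\in\calC$, then $z\neq s$ by the disjointness just established, so $z<s$. But then $p=s$ satisfies both conditions of question~\eqref{q:improving element}: we have $B'\subseteq\Up(s)$ and hence $\chi(A,B\cap\Up(s))\geq\chi(A',B')>n^{L-i}$, and $z\in C$ with $z<s$ gives item~(ii). This contradicts the 'no' answer to~\eqref{q:improving element}. This route stays purely at the level of the invariants and the loop logic, which is cleaner and more robust; you may want to adopt it.
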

\begin{proof}
Clearly, $A'\subset A$ is a set of minimal and $B'\subset B$ is a set of maximal points in $P$ and
\[
 \chi(A',B')\geq\chi(A,B)/2 >n^{L-(i-1)}/2> n^{L-i}
\]
as $n>2$, 
so~\ref{inv:A-B} holds.

To argue that $\calC'$ is a set of disjoint sets, we need to check whether $S$ is disjoint from every $C\in\calC$.
This holds, since in particular $S\subset \Up(A)$ and on the other hand $\Up(A)\cap C=\emptyset$ (by~\ref{inv:C-vs-A-and-B}), for every $C\in\calC$.
Note also that all sets in $\calC'$ are connected in $\cover(P)$; 
this follows from~\ref{inv:C-connected} for $\calC$ and the fact that $S$ itself is connected in $\cover(P)$.
This proves~\ref{inv:C-connected} for $\calC'$.
Since $A'\subset A$ and $B'\subset B$, invariant~\ref{inv:C-vs-A-and-B} remains true for all $C\in \calC$. 
The new set $S$ has the required property in \ref{inv:C-vs-A-and-B} explicitly.
Therefore, invariant~\ref{inv:C-vs-A-and-B} holds for the whole collection $\calC'$.

For invariant~\ref{inv:C-singletons}, let $c$ be a singleton of $\calC$ and suppose that there is $z\in\D(c)\cap S$.
Since $S\subset \Up(A)\cap \D(B)$, there exists $a\in A$ with $a\leq z$ in $P$.
However, this implies $a\leq c$ in $P$ and hence $\D(c)\cap A\neq \emptyset$, contradicting \ref{inv:C-vs-A-and-B} for $\set{c}\in\calC$.
Therefore, $D(c)\cap S=\emptyset$ for each singleton $c$ in $\calC$.
To complete the verification of \ref{inv:C-singletons} we also have to consider the case where $S$ contains only one point.
So say we have $S=\set{s}$ and suppose to the contrary that $\D(s)\cap C\neq \emptyset$ for some $C\in \calC$.
Let $z\in \D(s)\cap C$ be a point in the intersection.
By disjointness of the sets in $\calC'$, it holds that $z\neq s$ and hence $z<s$ in $P$.
Observe that $s$ fulfills the conditions of question~\eqref{q:improving element}, contradicting the fact that this question was answered with 'no'.
This establishes \ref{inv:C-singletons} for $\calC'$.

Since $\calD'=\calD$, invariant \ref{inv:D} for $\calD'$ is immediate.
Finally, $\val(\calC')<\val(\calC)$ and $\val(\calD')\leq \val(\calD)$ hold as $\calC \subsetneq \calC'$ and $\calD=\calD'$.
\end{proof}

In the second case of (iii), in which we have $A'\subset \D(S)$ and $B'\cap \Up(S)=\emptyset$, 
we finish the $i$-th iteration by updating the structure to $(A',B',\calC',\calD')$, where $\calC'=\calC$ and $\calD'=\calD \cup \{S\}$.
The proof that the invariants are kept in this case goes along similar arguments as in the first case.

Finally, if the answer to all questions~\eqref{q:improving element}-\eqref{q:C or D full} is 'no', then we stop iterating and Phase 1 is done.
\bigskip

Let $(A,B,\calC,\calD)$ be the final data structure of Phase 1 and suppose it is obtained in the $i$-th loop iteration (so $i\in\{0,\ldots,L-1\}$).
Thus, $(A,B,\calC,\calD)$ satisfies the invariants \ref{inv:A-B}-\ref{inv:D} and the answers to questions \eqref{q:improving element}-\eqref{q:C or D full} were 'no' for $(A,B,\calC,\calD)$ in the $(i+1)$-th loop iteration.
The negative answer for \eqref{q:C or D full} says in fact that $\norm{\calC}=M$ or $\norm{\calD}=M$.
Suppose that $\norm{\calC} = M$ from now on (the other case goes dually).

In a moment we will start with Phase 2, which consists of a loop that has $n$ iterations.
In each iteration we find a new principal vertex for the final construction of a $K_n$ subdivision.
Simultaneously, we refine the collection $\calC$ maintaining a large enough subcollection that interacts well with vertices already fixed.

Let us go more into detail now.
It will be convenient to use the following definition.
For a family of sets $\calF$ in $P$ and a point $p\in P$, we define
\[
\calF^p=\set{F\in\calF \mid p\in\Up(F)}. 
\]
While running the loop of Phase 2, we maintain as an invariant a pair $(V,\calE)$ with $V\subset P$ and $\calE\subset\calC$, that is satisfying the following items after the $j$-th loop iteration:
\begin{enumerateD}
 \item $\norm{V}=j$ and $|\calE|\geq M^{(1/h)^{j}} = \binom{n}{2}^{h^{n-j}}$,\label{inv:V-and-calC-sizes}
 \item $V$ is disjoint from every $C\in\calE$, and\label{inv:V-disjoint-from-calC}
 \item for every $v\in V$ and $C\in\calE$, there is $x\in P$ such that $x$ is covered by $v$ in $P$ and $\calE^x=\set{C}$.\label{inv:clean-branching-for-V}
\end{enumerateD}

\subsection*{Phase 2: Selecting the principal vertices}

Before the first iteration we set up the pair $(V,\calE)$ with $V=\emptyset$ and $\calE=\calC$.
Invariants \ref{inv:V-and-calC-sizes}-\ref{inv:clean-branching-for-V} are satisfied for $j=0$ vacuously.

Now we describe the $j$-th iteration of the loop ($1\leq j \leq n$).
Let $(V,\calE)$ be the pair satisfying the invariants after the $(j-1)$-th iteration.
The main issue is to find a new vertex to put into $V$.
We start to look for it from an appropriate vertex in $B$.
We want to pick any vertex from $B-\bigcup_{v\in V}\Up(v)$, so we need to argue that this set is non-empty.
By invariant~\ref{inv:clean-branching-for-V}, we get in particular that for every $v\in V$ there is $C\in\calC$ and $c\in C$ such that $c<v$ in $P$.
Since the answer to question \eqref{q:improving element} was 'no' in Phase~1, 
we have $\chi(A, B\cap \Up(v)) \leq n^{L-(i+1)}$ for every $v\in V$.
Thus,
\begin{align*}
\chi\Big(A,B - \bigcup_{v\in V} \Up(v)\Big) &\geq \chi(A,B) - \sum_{v\in V} \chi(A,B\cap \Up(v))\\
&> n^{L-i} - \norm{V}\cdot n^{L-(i+1)}\\
&> n^{L-i} - n\cdot n^{L-(i+1)} = 0.
\end{align*}
In particular, $B - \bigcup_{v\in V} \Up(v)$ is non-empty and we fix any point $b$ in this set.

Now starting from the point $b$ we go down in the poset $P$.
Let $M_0=M^{(1/h)^{j-1}}$.
Note that by \ref{inv:C-vs-A-and-B} and $\calE \subset \calC$ we have $\calE^b = \calE$.
So using~\ref{inv:V-and-calC-sizes} we get $\norm{\calE^b} \geq M_0$.
Initially we set $v=b$, and as long as there is a point $x\in P$ such that $x$ is covered by $v$ in $P$ and 
\[
\norm{\calE^x} > \norm{\calE^v}/M_0^{1/(h-1)},\quad \textrm{we update $v=x$.}
\]

Note that the process must stop as the height of $v$ is decreasing in every move.
Furthermore, $v$ never goes down to a minimal point.
Indeed, if $x<v$ and $x$ is minimal in $P$, then at most $h-2$ steps were done and hence $\norm{\calE^v}>\norm{\calE^b}/M_0^{(h-2)/(h-1)}\geq M_0^{1/(h-1)}$.
On the other hand, $\norm{\calE^x} \leq 1=M_0^0$ as all sets in $\calE^x$ must contain $x$ when $x$ is minimal in $P$, and the sets in $\calE\subset\calC$ are pairwise disjoint (by~\ref{inv:C}).

Again, by invariant~\ref{inv:C} there is at most one set in $\calE$ containing $v$.
If such a set $C$ exists we define $\calE_*=\calE-\set{C}$, and otherwise we let $\calE_*=\calE$.

Now consider the set $X$ consisting of all points that are covered by $v$ in $P$.
As no set in $\calE_*$ contains $v$, we have $\calE_*^v = \bigcup_{x\in X} \calE_*^x$.
We want to ignore somewhat redundant covers of $v$, so take a minimal subset $X'$ of $X$ such that $\calE_*^v = \bigcup_{x\in X'} \calE_*^x$.
The minimality of $X'$ allows us to fix for every $x\in X'$ a set $C^x\in\calE_*^x -\bigcup_{y\in X'-\set{x}} \calE_*^y$.
Finally, we update our maintained pair to $(V',\calE')$, where
\[
 V'=V\cup\set{v}\quad \textrm{ and }\quad \calE' = \set{C^x\mid x\in X'}.
\]
See Figure~\ref{fig:fixing-metavertex} for an illustration of the set $\calE'$.
This finishes the $j$-th iteration of the loop.

\begin{figure}[t]
 \centering
 \includegraphics[scale=1.0]{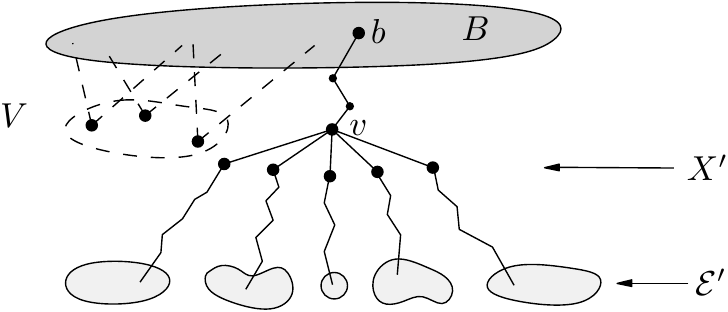}
 \caption{Point $b$ in $B-\Up(V)$ and a new point $v$ and its cover relations downwards in $P$.}
 \label{fig:fixing-metavertex}
\end{figure}

\begin{claim}
 The pair $(V',\calE')$ fulfills the invariants~\ref{inv:V-and-calC-sizes}-\ref{inv:clean-branching-for-V}.
\end{claim}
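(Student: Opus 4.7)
My plan is to verify the three invariants for $(V', \calE')$ one by one, with the size bound on $\calE'$ being the main computational step and the clean-branching condition the main combinatorial step.

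I would start by showing $\norm{V'} = j$, which follows once I rule out $v \in V$: since $v \leq b$ in $P$ while $b \notin \bigcup_{u \in V}\Up(u)$ by the choice of $b$, no $u \in V$ can equal $v$. For the lower bound on $\norm{\calE'}$, I observe that the sets $C^x$ selected for distinct $x \in X'$ are themselves distinct by their defining property, so $\norm{\calE'} = \norm{X'}$. To bound $\norm{X'}$ from below, I use $\calE_*^v = \bigcup_{x \in X'}\calE_*^x$ (by minimality of $X'$) together with the stopping condition, which gives $\norm{\calE_*^x} \leq \norm{\calE^x} \leq \norm{\calE^v}/M_0^{1/(h-1)}$ for every cover $x$ of $v$. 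Combined with $\norm{\calE_*^v} \geq \norm{\calE^v} - 1$ (since at most one set of $\calE$ contains $v$, by the pairwise disjointness inside $\calC$), this yields $\norm{X'} \geq (1 - 1/\norm{\calE^v})\cdot M_0^{1/(h-1)}$. Using that at most $h - 2$ descents were performed and $\norm{\calE^b} \geq M_0$, the running value $\norm{\calE^v}$ stays above $M_0^{1/(h-1)}$; together with the size of $M_0$ inherited from $M = \binom{n}{2}^{h^n}$, this should give $\norm{X'} \geq M_0^{1/h} = M^{(1/h)^j}$.

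For the second invariant, disjointness of $V$ from every $C \in \calE'$ is inherited from the old invariant since $\calE' \subset \calE$. To see that $v$ belongs to no set of $\calE'$, I note $\calE' \subset \calE_*$, and $\calE_*$ was defined precisely to exclude the unique set of $\calE$ containing $v$ (if any). For the third invariant, the case $v' \in V$ is immediate: for any $C \in \calE' \subset \calE$, the old invariant supplies $x$ covered by $v'$ with $\calE^x = \set{C}$, which restricts to $(\calE')^x = \set{C}$ since $C \in \calE'$. The case $v' = v$ is where the choice of $\calE'$ pays off. Given $C \in \calE'$, write $C = C^x$ for the unique $x \in X'$ with $C \in \calE_*^x$; this $x$ is covered by $v$, and clearly $C^x \in (\calE')^x$. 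For any other $y \in X' \setminus \set{x}$, the defining property $C^y \in \calE_*^y \setminus \bigcup_{z \in X' \setminus \set{y}} \calE_*^z$ shows $C^y \notin \calE_*^x$; since $C^y \in \calE_*$, this forces $C^y \notin \calE^x$, so $(\calE')^x = \set{C^x} = \set{C}$ as required.

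The main obstacle is the numerical estimate in the first invariant: balancing the lower bound on $\norm{\calE^v}$ coming from the $h-2$ bound on the descent length, the cap $\norm{\calE^v}/M_0^{1/(h-1)}$ on cover sizes from the stopping condition, the loss of one set in passing from $\calE$ to $\calE_*$, and the target $M^{(1/h)^j}$ must be done so that the geometric decay sustained through $n$ iterations of Phase 2 remains within the budget dictated by $M = \binom{n}{2}^{h^n}$.
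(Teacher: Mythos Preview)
Your proposal is correct and follows essentially the same approach as the paper's proof: the same counting inequality $\norm{\calE^v}\leq \norm{X'}\cdot\norm{\calE^v}/M_0^{1/(h-1)}+1$ for~\ref{inv:V-and-calC-sizes}, the same use of $\calE'\subset\calE_*$ for~\ref{inv:V-disjoint-from-calC}, and the same exploitation of the defining property of $C^x$ for~\ref{inv:clean-branching-for-V}. Your treatment of~\ref{inv:clean-branching-for-V} is in fact more explicit than the paper's, which simply asserts that $C^x=C$ ``implies $\calE'^x=\{C\}$''; the only point you leave implicit is that ``at most $h-2$ descents were performed'' relies on the fact (established just before the claim) that $v$ never becomes minimal.
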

\begin{proof}
First of all, we show that $v\not\in V$ and therefore $\norm{V'}=j$.
Recall that we have chosen $b\in B$ such that $w\not\leq b$ in $P$, for every $w\in V$.
On the other hand, by our procedure we have $v \leq b$ in $P$ and hence $v\not\in V$.

Now we aim to get the required lower bound for $\norm{\calE'}$.
Since the sets $C^x$, $C^{y}$ are distinct for distinct $x,y\in X'$, we have $\norm{\calE'}=\norm{X'}$.
Moreover,
\begin{align*}
 \norm{\calE^v}&\leq \norm{\calE_*^v}+1=\norm{\bigcup_{x\in X'}\calE_*^x}+1\leq \sum_{x\in X'}\norm{\calE_*^x} +1\leq \sum_{x\in X'}\norm{\calE^x} +1\\
 &\leq \norm{X'}\cdot\norm{\calE^v}/M_0^{1/(h-1)}+1.
\end{align*}
Since $\norm{\calE^v}>M_0^{1/(h-1)}\geq 1$ and $M_0\geq 2^{h(h-1)}$ (this follows from~\ref{inv:V-and-calC-sizes} and $n\geq 3$), we deduce that
\[
 \norm{\calE'}=\norm{X'}\geq \frac{\norm{\calE^v}-1}{\norm{\calE^v}}\cdot M_0^{1/(h-1)}\geq\frac12 M_0^{1/(h-1)}\geq M_0^{1/h} = M^{(1/h)^{j}}.
\]
This proves~\ref{inv:V-and-calC-sizes}.

Invariant~\ref{inv:V-disjoint-from-calC} holds as $V$ is disjoint from every set in $\calE\supset\calE'$ and as $v$ is not contained in any set of $\calE_*\supset\calE'$.

It remains to verify~\ref{inv:clean-branching-for-V} for $(V',\calE')$.
Since $\calE'\subseteq \calE$, we only need to check this item for the new vertex $v$.
Consider a set $C\in\calE'$.
By the definition of $\calE'$ there is $x\in X'$ such that $C^x=C$, implying $\calE'^x=\set{C}$.
This completes the verification of the invariants for $(V',\calE')$.
\end{proof}

\bigskip

After the completion of Phase 2 we can finally construct a subdivision of $K_n$ in $\cover(P)$.
Let $(V,\calE)$ be the maintained pair after $n$ loop iterations.
By \ref{inv:V-and-calC-sizes} we have
\[
 \norm{V}=n \quad\textrm{ and }\quad \norm{\calE} \geq \binom{n}{2}.
\]
The points in $V$ will be the principal vertices of our subdivision of $K_n$.
We finish the construction leading to the edges of $K_n$.
Since $\norm{\calE}\geq \binom{n}{2}$, for every pair of distinct points $v_1,v_2\in V$ we can fix a unique set $C_{v_1v_2}\in\calE$.

By invariant~\ref{inv:clean-branching-for-V}, there are cover relations $x_1<v_1$ and $x_2<v_2$ in $P$ such that $\calE^{x_1}=\calE^{x_2}=\set{C_{v_1v_2}}$. 
In particular, there are $c_1,c_2\in C_{v_1v_2}$ such that $c_1\leq x_{1}< v_1$ and $c_2\leq x_{2}< v_2$ in $P$.
Let 
\[
 c_1=y_1<y_2<\cdots<y_{r}=x_{1}\ \text{ and }\ c_2=z_1<z_2<\cdots<z_{s}=x_{2}
\]
be chains consisting of cover relations in $P$.
Fix a path $P_{v_1v_2}$ connecting $v_1$ and $v_2$ in $\cover(P)$ using only vertices from the sets  $\set{y_1,\ldots,y_{r},v_1}$, $\set{z_1,\ldots,z_{s},v_2}$, and $C_{v_1v_2}$.
Such a path exists since $C_{v_1v_2}$ is connected in $\cover(P)$ (by~\ref{inv:C-connected}).

We claim that the union of these paths forms a subdivision of $K_n$ in $\cover(P)$.
All we need to prove is that whenever there is $z\in P_{v_1v_2}\cap P_{v_1'v_2'}$ for distinct two-sets $\set{v_1,v_2},\set{v_1',v_2'}\subset V$, 
then $z$ is an endpoint of both paths.
By the construction of our paths, there are cover relations $x_1 < v_1$, $x_2 < v_2$, $x_1' < v_1'$, and $x_2' < v_2'$ in $P$ with $\calE^{x_1}=\calE^{x_2}=\set{C_{v_1v_2}}$ and $\calE^{x_1'}=\calE^{x_2'}=\set{C_{v_1'v_2'}}$.

First, suppose that $z$ is an endpoint of one path and an internal point of the other path.
Without loss of generality we assume that $z=v_1$ (so it is an endoint of $P_{v_1v_2}$).
By the definition of $P_{v_1'v_2'}$, we have $z\leq x_1'$ in $P$, or $z\leq x_2'$ in $P$, or $z\in C_{v_1'v_2'}$.
In the first case it follows that $\calE=\calE^{v_1} \subseteq \calE^{x_1'} =\set{C_{v_1'v_2'}}$, which is a clear contradiction.
The second case is similar.
And the third one contradicts the fact that $V$ is disjoint from $\calE$ (by~\ref{inv:V-disjoint-from-calC}).

So suppose that $z\in P_{v_1v_2}\cap P_{v_1'v_2'}$ is an internal vertex of both paths.
The sets $C_{v_1v_2}$ and $C_{v_1'v_2'}$ cannot both contain $z$ as they are disjoint (by~\ref{inv:C}).
Hence we may assume $z\not\in C_{v_1v_2}$.
By the definition of $P_{v_1v_2}$, we then must have $z \leq x_1$ or $z\leq x_2$ in $P$.
Say $z\leq x_1$ holds in $P$.
Now observe that $z\in P_{v_1'v_2'}$ implies that there is $c'\in C_{v_1'v_2'}$ with $c'\leq z$ in $P$.
Hence $c'\leq x_{1}$ in $P$.
However, from this we deduce $C_{v_1'v_2'}\in \calE^{x_{1}}=\set{C_{v_1,v_2}}$, which is a contradiction.
We conclude that both path $P_{v_1v_2}$ and $P_{v_1'v_2'}$ are indeed internally disjoint.

As a consequence we established the existence of a subdivision of $K_n$ in $\cover(P)$.
This completes the proof of Theorem~\ref{thm:main-thm}.

\section{Proof of Theorem~\ref{thm:standard-ex}}
To prove Theorem~\ref{thm:standard-ex} we are going to use the framework from the previous section.
In particular, we run through two phases which will give us appropriate sets to construct a subdivision of $K_n$.
Compared to the previous section, the structure with its invariants in Phase 1 is slightly different.
For instance, the two collections will contain only singletons so that we can use two sets of points instead.
More importantly, we have new invariants~\ref{inv:k+k-C-distance} and \ref{inv:k+k-D-distance} that are somehow substituting the bounded height setting from the previous section.
Furthermore, the way we will get new elements (after a 'yes'-answer for the third question) is new.
Phase~2 and the construction of the subdivision will go along the same lines as in the first proof.

We omit the trivial cases and assume $n\geq3$.
Let $P$ be a $(k+k)$-free poset that contains a standard example $S_m$ with
\[
m > n^L, \textrm{ where } L=2\binom{M+k-1}{k-1}-1\textrm{ and }M=\binom{n}{2}^{(k-1)^n}. 
\]

During Phase~1 we maintain an additional structure $(A,B,C,D)$ while running a loop.
After the $i$-th iteration step we will have the following invariants:
\begin{enumeratea}
 \item there is a standard example of size $n^{L-i}$ in $P$ with $A$ and $B$ being the sets of its minimal and maximal points, respectively,\label{inv:k+k-A-B}
 \item $C$ is an antichain in $P$ with $\norm{C}\leq M$ and\label{inv:k+k-C}
 \begin{enumerateNested}
  \item $A \cap \D(c) =\emptyset$ and $B \subset \Up(c)$, for every $c\in C$,\label{inv:k+k-C-vs-A-and-B}
  \item all directed paths from $c$ to $b$ in $\cover(P)$ are of length less than $k$, for every $c\in C$, $b\in B$,\label{inv:k+k-C-distance}
 \end{enumerateNested}
 \item $D$ is an antichain in $P$ with $\norm{D}\leq M$ and\label{inv:k+k-D}
 \begin{enumerateNested}
  \item $B \cap \Up(d) =\emptyset$ and $A \subset \D(d)$, for every $d\in D$,
  \item all directed paths from $a$ to $d$ in $\cover(P)$ are of length less than $k$, for every $a\in A$, $d\in D$.\label{inv:k+k-D-distance}
 \end{enumerateNested}
\end{enumeratea}

Again, we have a measure of quality of the maintained structure.
For $c\in C$ let $\val(c)$ be the length of a longest directed path in $\cover(P)$ from $c$ to any $b\in B$.
Note that by invariant~\ref{inv:k+k-C-distance}, we have $\val(c)< k$ for every $c\in C$.
Then we define $\val(C)$ in the same way as in the previous section.
Analogously, we define $\val(D)$.
To compare the values we use the same relation as before.
Note that the number of possible values for $C$ and $D$, respectively, is bounded by $\binom{M+k-1}{k-1}$.

\subsection*{Phase 1: Updating the data structure}
We set up the initial structure as follows.
Fix a copy of a standard example of size $m$ in $P$. 
Set $A$ and $B$ to be the set of its minimal points and maximal points, respectively.
Set $C$ and $D$ to be the empty set. 
Clearly conditions \ref{inv:k+k-A-B}-\ref{inv:k+k-D} hold for $i=0$.

Now we run a loop to improve the quality of the data structure.
In each loop iteration we ask up to three questions about the current structure $(A,B,C,D)$.
If we get only negative answers, then the loop terminates and Phase~1 is done.
If we get a positive answer to one of the questions, then we finish the iteration by updating the structure to $(A',B',C',D')$ that is satisfying conditions \ref{inv:k+k-A-B}-\ref{inv:k+k-D} and additionally 
 \begin{align*}
\val(C')<\val(C) &\textrm{ and } \val(D')\leq\val(D), \textrm{ or }\\
\val(C')\leq\val(C) &\textrm{ and } \val(D')<\val(D).
\end{align*}
Since the number of possible values for the set $C$ (and $D$, resp.) is bounded by $\binom{M+k-1}{k-1}$, there will be at most $L=2\binom{M+k-1}{k-1}-1$ iterations in total.

Now we are going to describe the $i$-th ($1\leq i \leq L$) iteration in detail.
Let $(A,B,C,D)$ be the current structure.
Hence it satisfies conditions \ref{inv:k+k-A-B}-\ref{inv:k+k-D} after the $(i-1)$-th step.
The iteration starts with the evaluation of the following question:
\begin{align}\tag{Q1}\label{q:k+k-improving element}
\textrm{Is there a point $p\in P$ such that}
\end{align}
\begin{enumerate}
 \item $\norm{B \cap \Up(p)} > n^{L-i}$, and
 \item there is $c\in C$ such that $c< p$ in $P$?
\end{enumerate}

First, suppose that the answer is 'yes' and fix such a point $p\in P$.
In this case we finish the $i$-th iteration by updating the structure to $(A',B',C',D')$, where
\begin{align*}
B'&=B \cap \Up(p),\ A'=\set{a\in A \mid \textrm{$a$ is incomparable to some $b\in B'$}},\\
C'&=C\cup \set{p}- \set{c\in C\mid c < p \textrm{ in $P$}},\ \textrm{ and }\ D'=D.
\end{align*}
Note that $A'$ and $B'$ induce a standard example of size larger than $n^{L-i}$ and hence invariant~\ref{inv:k+k-A-B} is satisfied.
We skip the proof for the fact that $(A',B',C',D')$ satisfies the invariants \ref{inv:k+k-C}-\ref{inv:k+k-D} and moreover, that $\val(C')<\val(C)$ and $\val(D')\leq\val(D)$.
It follows along the same lines as in the argument for the analogue claim in the previous section.

If the answer for question~\eqref{q:k+k-improving element} is 'no', then the procedure continues with a dual question:
\begin{align}\tag{Q2}\label{q:k+k-dual improving element}
\textrm{Is there a point $p\in P$ such that}
\end{align}
\begin{enumerate}
 \item $\norm{A \cap \D(p)} > n^{L-i}$, and
 \item there is $d\in D$ such that $p< d$ in $P$?
\end{enumerate}
If the answer for~\eqref{q:k+k-dual improving element} is 'yes', then we finish the $i$-th iteration by updating the structure to $(A',B',C',D')$, where
\begin{align*}
A'&=A \cap \D(p),\ B'=\set{b\in B \mid \textrm{$b$ is incomparable to some $a\in A'$}},\\
C'&=C,\ \textrm{ and }\ D'=D\cup \set{p}-\set{d\in D\mid p < d \textrm{ in $P$}}.
\end{align*}
The proof that this new structure satisfies conditions \ref{inv:k+k-A-B}-\ref{inv:k+k-D} and that it improves the quality is dual to the one of question~\eqref{q:k+k-improving element}.

If the answers for questions~\eqref{q:k+k-improving element} and~\eqref{q:k+k-dual improving element} are both 'no', then the procedure continues with the third question:
\begin{align}\tag{Q3}\label{q:k+k-C or D full}
\textrm{Are $\norm{C}<M$ and $\norm{D}<M$?}
\end{align}

We first deal with the 'yes' answer.
In this case, we are going to show how to find a new element to extend $C$ or $D$.
This part of the procedure is simpler than its analogue in the previous section.
We do not unfold the poset with Lemma~\ref{lem:unrolling}, but instead make use of the structure of standard examples and the fact that $P$ is $(k+k)$-free.
(Actually, we do not know how to apply Lemma~\ref{lem:unrolling} to $(k+k)$-free posets, since the sets we obtain from it do not have a 'short-distance-property' like in invariant~\ref{inv:k+k-C-distance}.)

Fix an incomparable pair $(a_0,b_0)\in \Inc(A,B)$.
Consider the following partition of $A-\set{a_0}$ and $B-\set{b_0}$.
Let $A_{\textrm{far}}$ be the set of points $a\in A$ for which there exists a directed path from $a$ to $b_0$ of length at least $k$, and let $A_{\textrm{cl}}=A - A_{\textrm{far}}$.
Dually define $B_{\textrm{far}}$ and $B_{\textrm{cl}}$.

The key observation here is that for each incomparable pair $(a,b)\in\Inc(A,B)-\set{(a_0,b_0)}$,
we have $a\in A_{\textrm{cl}}$ or $b\in B_{\textrm{cl}}$.
Indeed, otherwise there are two directed paths in $\cover(P)$ of length at least $k$, one from $a_0$ to $b$ and one from $a$ to $b_0$.
And since $P$ is $(k+k)$-free, we would deduce that $a_0<b_0$ or $a<b$ in $P$, which is not true.
As a consequence, we get
\[
\norm{A_{\textrm{cl}}} \geq (\norm{A}-1)/2\ \textrm{ or } \norm{B_{\textrm{cl}}} \geq (\norm{B}-1)/2.
\]

Suppose first $\norm{B_{\textrm{cl}}} \geq (\norm{B}-1)/2$.
Then we finish the $i$-th iteration by updating the structure to $(A',B',C',D')$, where
\begin{align*}
 B'&=B_{\textrm{cl}},\ A'=\set{a\in A \mid \textrm{$a$ is incomparable to some $b\in B'$}},\\
 C'&=C\cup\set{a_0}, \textrm{ and } D'=D. 
\end{align*}
\begin{claim}
 The structure $(A',B',C',D')$ keeps the invariants~\ref{inv:A-B}-\ref{inv:D}.
 Moreover, $\val(C')<\val(C)$ and $\val(D')\leq\val(D)$.
\end{claim}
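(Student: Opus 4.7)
The plan is to verify each of \ref{inv:k+k-A-B}--\ref{inv:k+k-D} for the updated structure $(A',B',C',D')$ and then to compare the sorted value sequences. The main obstacle is the antichain property of $C' = C \cup \{a_0\}$; here I must combine the forced incomparabilities inside the current standard example with the negative answer to question \eqref{q:k+k-improving element}, because the invariant \ref{inv:k+k-C-vs-A-and-B} alone only rules out one direction of comparability.

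For \ref{inv:k+k-A-B}, indexing the current standard example as $\{(a_j,b_j)\}_j$ with the chosen pair $(a_0,b_0)$, I would take the sub-example on the indices $\{j : b_j \in B'\}$. A routine derivation from the definition of $S_m$ shows that any standard example embedded in $P$ has its $a_j$'s pairwise incomparable in $P$ and $a_j \| b_i$ (in $P$) iff $i=j$. Consequently $A' = \{a_j : b_j \in B'\}$, so $|A'|=|B'|$ and the sub-example is genuine; its size is $|B'|=|B_{\textrm{cl}}| \geq (|B|-1)/2 > n^{L-i}$, using $|B|>n^{L-(i-1)}\geq 3\cdot n^{L-i}$ and $n\geq 3$.

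For \ref{inv:k+k-C}, the bound $|C'|\leq M$ is immediate from the 'yes' answer to \eqref{q:k+k-C or D full}. Regarding the antichain property, the case $a_0\leq c$ is ruled out by $A\cap\D(c)=\emptyset$ from \ref{inv:k+k-C-vs-A-and-B}. For the case $c<a_0$, I would set $p=a_0$ and observe that condition~(ii) of \eqref{q:k+k-improving element} is witnessed by $c$, while condition~(i) reads $|B\cap\Up(a_0)| > n^{L-i}$, which holds because $a_0<b_j$ for every $j\neq 0$ gives $|B\cap\Up(a_0)|\geq|B|-1>n^{L-i}$ by the same arithmetic. This would contradict the negative answer to \eqref{q:k+k-improving element} in the current iteration. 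The remaining sub-invariants are bookkeeping: for old $c\in C$ they transfer via $A'\subseteq A$ and $B'\subseteq B$; for $a_0$, the relation $B'\subset\Up(a_0)$ follows since $B'\subseteq B\setminus\{b_0\}$ consists of $b_j$'s with $j\neq 0$; $A'\cap\D(a_0)=\emptyset$ uses both that $a_0\notin A'$ (its only $B$-partner $b_0$ is not in $B'$) and the pairwise incomparability of the $a_j$'s; and the distance bound is precisely the definition of $B_{\textrm{cl}}$.

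Invariant \ref{inv:k+k-D} is preserved automatically since $D'=D$ and the sub-invariants only weaken when $A$ and $B$ shrink. Finally, $\val(D')\leq\val(D)$ holds pointwise because shrinking $A$ can only shorten the longest directed path into any fixed $d\in D$. For $\val(C')<\val(C)$, the new entry $\val(a_0)$ lies in $\{1,\ldots,k-1\}$ by the $B_{\textrm{cl}}$-condition, so inserting it into the sorted $\val$-sequence strictly decreases position $|C|+1$ from the padding value $k$ down to the largest real entry, which is $<k$. This delivers the required strict drop in quality.
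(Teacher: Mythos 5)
Your proposal is correct and follows essentially the same route as the paper's own (short) proof: the antichain property of $C' = C\cup\{a_0\}$ is settled by combining invariant (b.1) with the negative answer to \eqref{q:k+k-improving element}, invariant (b.2) comes directly from the definition of $B_{\textrm{cl}}$, and the quality improvement follows because $\val(a_0) < k$ displaces a padding entry. The only difference is that you spell out details the paper leaves implicit (verifying condition~(i) of \eqref{q:k+k-improving element} for $p=a_0$, and checking $a_0\notin A'$ via $b_0\notin B'$), which is fine; the argument is sound.
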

\begin{proof}
 Clearly, $A'$ and $B'$ form a standard example that is contained in the original standard example and its size is $\norm{A'}=\norm{B'}=\norm{B_{\textrm{cl}}}\geq (\norm{B}-1)/2 \geq (n^{L-(i-1)}-1)/2 \geq n^{L-i}$, as $n\geq 3$.
 Thus, \ref{inv:k+k-A-B} holds.
 
 Now we aim to prove that $C'=C\cup\set{a_0}$ is an antichain in $P$.
 Since $C$ is an antichain in $P$ (by~\ref{inv:C}), all we need to prove is that $a_0$ is incomparable to all points in $C$.
 To see this, note that $a_0 < c$ in $P$ for some $c\in C$ violates invariant~\ref{inv:k+k-C-vs-A-and-B} for $(A,B,C,D)$.
 And if $c < a_0$ in $P$ for some $c\in C$, then $a_0$ would be a witness for a 'yes' answer for question~\eqref{q:k+k-improving element}, 
 which is a contradiction.
 Hence $C'$ is an antichain in $P$ as required.
 
 Invariant~\ref{inv:k+k-C-vs-A-and-B} holds trivially as $a_0$ is incomparable to all points in $A'$ and below all points in $B'$.
 Invariant~\ref{inv:k+k-C-distance} holds as all directed paths from $a_0$ to any $b\in B'$ in $\cover(P)$ are of length less than $k$ by the definition of $B'=B_{\textrm{cl}}$.
 Since $D'=D$ and $A'\subset A$, $B'\subset B$, invariant~\ref{inv:k+k-D} is satisfied.
 
 Finally, $\val(C')<\val(C)$ and $\val(D')\leq\val(D)$ simply as $C\subsetneq C'$ and $D=D'$, respectively.
\end{proof}

If $\norm{A_{\textrm{cl}}} \geq (\norm{A}-1)/2$, then we finish the $i$-th iteration by updating the structure to $(A',B',C',D')$, where
\begin{align*}
 A'&=A_{\textrm{cl}},\ B'=\set{b\in B \mid \textrm{$b$ is incomparable to some $a\in A'$}},\\
 C'&=C, \textrm{ and } D'=D\cup\set{b_0}. 
\end{align*}
 The proof that $(A',B',C',D')$ keeps the invariants~\ref{inv:A-B}-\ref{inv:D} and that $\val(C')\leq\val(C)$ and $\val(D')<\val(D)$ goes dually to the previous case.

 \bigskip
 
Let $(A,B,C,D)$ be the final data structure of Phase 1.
From now on the proof goes exactly as the proof for Theorem~\ref{thm:main-thm}. 
So we proceed with Phase~2, which consists of a loop that has $n$ iterations.
To follow the proof from the previous section explicitly one should replace the sets $C$ and $D$ with the collections of their singletons.
Actually, since all the sets in the collections are singletons, the proof and invariant~\ref{inv:D} could be simplified.
The only important difference in the argument is that we previously used the fact that the poset has bounded height.
We replace it here with the observation that by~\ref{inv:k+k-C-distance} (and~\ref{inv:k+k-D-distance} analogously) the subposet induced by $\D(B)\cap \Up(C)$ has height less than $k$.

After Phase~2 we have a pair $(V,\calE)$ and we can construct a subdivision of $K_n$ exactly in the same way as in the proof of Theorem~\ref{thm:main-thm}.
This finishes the proof for Theorem~\ref{thm:standard-ex}.

\section*{Acknowledgements}
The authors send great thanks to Tom Trotter and Stefan Felsner for the time spent on discussions all over the topic, reading early versions of the manuscript, and sharing thoughts on improving the proof ideas and the exposition.

\bibliographystyle{plain}
\bibliography{posets-dimension}

\begin{thebibliography}{10}

\bibitem{DJW12}
Vida Dujmovi{\'c}, Gwena{\"e}l Joret, and David~R. Wood.
\newblock An improved bound for first-fit on posets without two long
  incomparable chains.
\newblock {\em SIAM J. Discrete Math.}, 26(3):1068--1075, 2012.

\bibitem{FKT13}
Stefan Felsner, Tomasz Krawczyk, and William~T. Trotter.
\newblock On-line dimension for posets excluding two long incomparable chains.
\newblock {\em Order}, 30(1):1--12, 2013.

\bibitem{FLT10}
Stefan Felsner, Ching~Man Li, and William~T. Trotter.
\newblock Adjacency posets of planar graphs.
\newblock {\em Discrete Math.}, 310(5):1097--1104, 2010.

\bibitem{FTW13}
Stefan Felsner, William~T. Trotter, and Veit Wiechert.
\newblock The dimension of posets with planar cover graphs.
\newblock {\em Graphs Combin.}, 31(4):927--939, 2015.

\bibitem{FHRT92}
Z.~F{\"u}redi, P.~Hajnal, V.~R{\"o}dl, and W.~T. Trotter.
\newblock Interval orders and shift graphs.
\newblock In {\em Sets, graphs and numbers ({B}udapest, 1991)}, volume~60 of
  {\em Colloq. Math. Soc. J\'anos Bolyai}, pages 297--313. North-Holland,
  Amsterdam, 1992.

\bibitem{GM15}
Martin {Grohe} and D\'aniel {Marx}.
\newblock {Structure theorem and isomorphism test for graphs with excluded
  topological subgraphs.}
\newblock {\em {SIAM J. Comput.}}, 44(1):114--159, 2015.

\bibitem{JMMTWW}
Gwena{\"e}l Joret, Piotr Micek, Kevin~G. Milans, William~T. Trotter, Bartosz
  Walczak, and Ruidong Wang.
\newblock Tree-width and dimension.
\newblock {\em Combinatorica}, 36(4):431--450, 2016.

\bibitem{JMTWW}
Gwena\"{e}l Joret, Piotr Micek, William~T. Trotter, Ruidong Wang, and Veit
  Wiechert.
\newblock On the dimension of posets with cover graphs of treewidth $2$.
\newblock {\em Order}, to appear.
\newblock \href{http://arxiv.org/abs/1406.3397}{arXiv:1406.3397}.

\bibitem{JMW16}
Gwena\"{e}l Joret, Piotr Micek, and Veit Wiechert.
\newblock Sparsity and dimension.
\newblock In {\em Proceedings of the Twenty-Seventh Annual ACM-SIAM Symposium
  on Discrete Algorithms}, SODA '16, pages 1804--1813, Philadelphia, PA, USA,
  2016. Society for Industrial and Applied Mathematics.

\bibitem{Kel81}
David Kelly.
\newblock On the dimension of partially ordered sets.
\newblock {\em Discrete Math.}, 35:135--156, 1981.

\bibitem{KT00}
H.~A. Kierstead and W.~T. Trotter.
\newblock Interval orders and dimension.
\newblock {\em Discrete Math.}, 213(1-3):179--188, 2000.
\newblock Selected topics in discrete mathematics (Warsaw, 1996).

\bibitem{LMSTW14}
Micha{\l} Laso{\'n}, Piotr Micek, Noah Streib, William~T. Trotter, and Bartosz
  Walczak.
\newblock An extremal problem on crossing vectors.
\newblock {\em J. Combin. Theory Ser. A}, 128:41--55, 2014.

\bibitem{RS03}
Neil Robertson and P.D Seymour.
\newblock Graph minors. {XVI}. {E}xcluding a non-planar graph.
\newblock {\em Journal of Combinatorial Theory, Series B}, 89(1):43 -- 76,
  2003.

\bibitem{ST14}
Noah Streib and William~T. Trotter.
\newblock Dimension and height for posets with planar cover graphs.
\newblock {\em European J. Combin.}, 35:474--489, 2014.

\bibitem{Tro-book}
William~T. Trotter.
\newblock {\em Combinatorics and partially ordered sets}.
\newblock Johns Hopkins Series in the Mathematical Sciences. Johns Hopkins
  University Press, Baltimore, MD, 1992.
\newblock Dimension theory.

\bibitem{Tro-handbook}
William~T. Trotter.
\newblock Partially ordered sets.
\newblock In {\em Handbook of combinatorics, {V}ol.\ 1,\ 2}, pages 433--480.
  Elsevier Sci. B. V., Amsterdam, 1995.

\bibitem{TW15+}
William~T. Trotter and Ruidong Wang.
\newblock Planar posets, dimension, breadth and the number of minimal elements.
\newblock {\em Order}, 33(2):333--346, 2016.

\bibitem{TM77}
William~T. Trotter, Jr. and John~I. Moore, Jr.
\newblock The dimension of planar posets.
\newblock {\em J. Combinatorial Theory Ser. B}, 22(1):54--67, 1977.

\bibitem{Wal15}
Bartosz Walczak.
\newblock Minors and dimension.
\newblock In {\em Proceedings of the {T}wenty-{S}ixth {A}nnual {ACM}-{SIAM}
  {S}ymposium on {D}iscrete {A}lgorithms (SODA 2015)}, pages 1698--1707, 2015.
\newblock \href{http://arxiv.org/abs/1407.4066}{arXiv:1407.4066}.

\end{thebibliography}

\end{document}